\newtheorem{thm}{Theorem}[section]
\newtheorem{hil}{Hilbert's \nth{17} Problem}
\newtheorem{que}{Question}[section]
\newtheorem{cor}[thm]{Corollary}
\newtheorem{lem}[thm]{Lemma}
\newcommand{\dia}{$\diamond$}
\newtheorem{ex}[thm]{Example}
\newtheorem{dfn}[thm]{Definition}
\newtheorem{rem}{Remark}[section]
\newcommand{\N}{\mathbb{N}}
\newcommand{\ox}{\bar{x}}
\newcommand{\ov}{\bar{v}}
\newcommand{\R}{\mathbb{R}}
\newcommand{\Z}{\mathbb{Z}}
\newcommand{\newt}{\mathrm{Newt}}
\newcommand{\conv}{\mathrm{Conv}}
\newcommand{\supp}{\mathrm{Supp}}
\newcommand{\pos}{\mathrm{Pos}}
\newcommand{\sq}{\mathrm{Sq}} 
\newcommand{\Rn}{\R^n}
\newcommand{\Zn}{\Z^n}
\newcommand{\Int}{\int\limits}
\newcommand{\abs}[1]{\left\lvert#1\right\rvert}
\newcommand{\vol}[1]{\mathrm{Vol}\!\left(#1\right)}
\newcommand{\norm}[1]{\left\lVert#1\right\rVert}
\author{Alperen A. Erg\"{u}r}
\email{erguer@math.tu-berlin.de}
\address{Technische Universit\"at Berlin, Institut f\"ur Mathematik, Sekretariat MA 3-2, Straße des 17. Juni 136, 10623, Berlin, Germany}
\thanks{Partially supported by NSF-MCS grant DMS-0915245, NSF-CAREER grant DMS-1151711, and Einstein Foundation, Berlin.}
\title[Multihomogenous nonnegative polynomials and sums of squares]{\mbox{} \\ 
\vspace{-1in} Multihomogenous nonnegative polynomials and sums of squares}
\begin{document}

\maketitle 

\vspace{-.4in} 
\begin{abstract} 
We refine and extend quantitative bounds, on the fraction of nonnegative 
polynomials that are sums of squares, to the multihomogenous case.
\end{abstract} 

\section{Introduction}
Let $\R\!\left[\ox\right]:=\R[x_1,\ldots,x_n]$ denote the ring of real 
$n$-variate polynomials and let $P_{n,2d}$ denote the vector space of forms (i.e homogenous polynomials) of degree $2d$ in 
$\R\!\left[\ox\right]$. 
A form $p \in P_{n,2d}$ is called {\em non-negative} if $p(\ox) \geq 0$ for every $\ox \in \Rn$.
The set of {\em non-negative} forms in $P_{n,2d}$ is closed under nonnegative linear combinations and 
thus forms a cone. We denote the cone of nonnegative degree $2d$ forms by 
$\pos_{n,2d}$. A fundamental problem 
in polynomial optimization and real algebraic geometry is to efficiently certify non-negativity for real forms, i.e., membership in $\pos_{n,2d}$. 

If a real form can be written as a sum of squares of other real forms then it is evidently non-negative. Polynomials in $P_{n,2d}$ that can be represented as sums of squares of real forms form a cone that we denote by $\sq_{n,2d}$. Clearly, $\sq_{n,2d}\subseteq\pos_{n,2d}$. We are then lead to the following question. 
\begin{que} \label{H1}
\label{q:often1}
For which pairs of  $(n,2d)$ do we have $\sq_{n,2d}=\pos_{n,2d}$?  
\end{que}

\noindent 
Hilbert showed that the answer to Question \ref{q:often1} is affirmative exactly for $(n,2d)\in (\{2\}\times 2\N) \cup (\N\times\{2\}) \cup \{3,4\}$ \cite{Hil88} . Hilbert's proof was not constructive: The first well known example of a non-negative form which is not sums of squares is due to Motzkin from around 1967: $x_3^6 + x_1^2x_2^2 (x_1^2+x_2^2-3x_3^2)$.

Hilbert included a variation of Question \ref{q:often1} in his famous list of  problems for \nth{20} century mathematicians: 
\begin{hil}
Do we have, for every $n$ and $2d$, that every $p\in \pos_{n,2d}$ is 
a sum of squares of {\em rational functions}? 
\end{hil}

\noindent 
Artin and Schreier solved Hilbert's \nth{17} Problem affirmatively around 1927  \cite{Artin}. However there is no known efficient and general algorithm for finding the asserted collection of rational functions for a given input $p$. 

Despite the computational hardness of finding a representation as a sum of squares of rational functions, obtaining a representation as a sum of squares of {\em polynomials} (when possible) can be done efficiently via {\em semidefinite programming} (see, e.g., \cite{parrilo}).  This connection to complexity theory strongly motivates a better understanding of the limits of semidefinite programing  approach to polynomial optimization. In this respect, we should note that for many problems of interest in algebraic geometry, forms with a special structure (e.g., sparse polynomials)  behave differently than generic forms of degree $2d$. So, we would like to study limits of sums of squares method in a more adaptive way that incorporates sparsity patterns. 

We first recall  the notion of Newton polytope and then a theorem of Reznick: For any  $p(\ox)=\sum_{\alpha \in \Zn} c_\alpha x^{\alpha}$ with $\alpha=(\alpha_1,\ldots,\alpha_n)$ and $\ox^{\alpha}=x^{\alpha_1}_1\cdots x^{\alpha_n}_n$,  the {\em Newton polytope} of $p$ is the convex hull $\newt(p):=\conv(\{\alpha\; | \; c_\alpha\neq 0\})$. 

\begin{thm} \cite[Thm.\ 1]{Rez78}
If $p=\sum^r_{i=1} g_i^2$ for some $g_1,\ldots,g_r\in\R\!\left[\ox\right]$ then $\newt(g_i) \subseteq \frac{1}{2} \newt(p)$ for all $i$.
\end{thm}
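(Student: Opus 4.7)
The plan is to argue via the extreme points (vertices) of the Newton polytopes involved, exploiting the fact that real squares cannot cancel. Set $N := \conv\!\left(\bigcup_{i=1}^r 2\newt(g_i)\right)$. Since each $\supp(g_i^2)\subseteq 2\newt(g_i)\subseteq N$, we have $\supp(p)\subseteq N$; it suffices to prove the reverse containment $N\subseteq\newt(p)$. Because $\newt(p)$ is convex, we need only check that every vertex of $N$ belongs to $\newt(p)$, i.e., appears in $\supp(p)$ with nonzero coefficient. Then, by $2\newt(g_i)\subseteq N\subseteq\newt(p)$, the conclusion follows.

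Let $v$ be a vertex of $N$. A standard convex-geometry fact says that any vertex of the convex hull of finitely many polytopes must itself be a vertex of one of them; so $v$ is a vertex of $2\newt(g_j)$ for some $j$, and more generally, for every index $i$ with $v\in 2\newt(g_i)$, $v$ is a vertex of $2\newt(g_i)$ (otherwise $v$ would lie in the relative interior of a face of $N$, contradicting extremality). Consequently, for every such $i$ the point $v/2$ is a vertex of $\newt(g_i)$, and inside $g_i^2$ the only way to obtain the monomial $x^v$ is by squaring the $x^{v/2}$ term of $g_i$: any decomposition $v=\alpha+\beta$ with $\alpha,\beta\in\newt(g_i)$ forces $\alpha=\beta=v/2$ because $v/2$ is extreme.

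Writing $c_{i,v/2}$ for the coefficient of $x^{v/2}$ in $g_i$ (taken to be zero if $v\notin 2\newt(g_i)$), the above gives
\[
[x^v]\,p \;=\; \sum_{i=1}^r [x^v]\,g_i^2 \;=\; \sum_{i=1}^r c_{i,v/2}^{\,2}.
\]
Since $v$ is a vertex of at least one $2\newt(g_i)$, at least one coefficient $c_{i,v/2}$ is nonzero, and since this is a sum of squares of real numbers, it is strictly positive. Hence $v\in\supp(p)\subseteq\newt(p)$, completing the argument.

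The main obstacle, and the place where the proof genuinely uses reality, is the non-cancellation step in the last display: one needs both (i) the combinatorial observation that extremality of $v$ in $N$ forces $v/2$ to be an extreme point of $\newt(g_i)$ whenever it lies there, so that no off-diagonal cross terms in $g_i^2$ can contribute to $[x^v]$, and (ii) the field-theoretic input that a sum of real squares vanishes only when each summand vanishes. Over $\mathbb{C}$ this second point fails, and indeed the theorem can fail; the virtue of the proof is that it localizes exactly where positivity of $\mathbb{R}$ enters.
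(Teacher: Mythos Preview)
The paper does not give its own proof of this statement; it is simply quoted from Reznick \cite{Rez78} and used as background. There is therefore nothing in the paper to compare your argument against.

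That said, your proof is correct and is essentially Reznick's original argument. The two substantive steps are exactly the ones you isolate: first, a vertex $v$ of $N=\conv\!\bigl(\bigcup_i 2\,\newt(g_i)\bigr)$ must be a vertex of every $2\,\newt(g_i)$ that contains it (else it would be a proper convex combination of points of $N$), so in each $g_i^2$ the monomial $x^v$ can only arise as the square of the $x^{v/2}$ term; second, over $\R$ a sum of squares $\sum_i c_{i,v/2}^2$ is strictly positive once any summand is nonzero, so $v\in\supp(p)$. Your remark that the containment $\supp(p)\subseteq N$ is not actually needed for the conclusion is also accurate; only $N\subseteq\newt(p)$ matters.
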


\noindent 
This theorem enables us to refine comparison of the cones of  {\em sums of squares} and {\em non-negative} polynomials to be more sensitive to monomial term structure. 
\begin{dfn}
For any polytope $Q\!\subset\!\Rn$ with vertices in $\Zn$, let 
$N_Q:=\#(Q\cap \Zn)$,\linebreak $c=(c_\alpha \; | \; \alpha\in Q\cap \Zn)$, $p_{c}(\ox)=\sum_{\alpha \in Q\cap 
\Zn} c_{\alpha} \ox^{\alpha}$ and then define\\  
\mbox{}\hfill 
$\pos_Q:=\{c\in\R^{N_Q}\; | \; p_{c}(\ox) \geq 0 \; \text{for every} \; x \in \mathbb{R}^n \}$\hfill\mbox{}\\  
\mbox{}\hfill \ \ \ $\sq_Q:=\{c\in\R^{N_Q}\; | \; 
 p_{c}(\ox)=\sum_{i} q_i(\ox)^2 \; \text{where} \; \newt(q_i) \subseteq \frac{1}{2}Q \}$\hfill\mbox{}\dia 
\end{dfn}

Now we can rewrite Question \ref{H1} in a more refined way.

\begin{que} \label{H2}
For which lattice polytopes $Q \subseteq \mathbb{R}^n$, we have $\pos_Q = \sq_Q$ ?
\end{que}

\noindent For the case of lattice polytopes $Q$ where $\frac{1}{2}Q$ is also a lattice polytope, Question \ref{H2} is solved by Blekherman, Smith and Velasco \cite{velasco}. Their work shows that for $Q=2P$, if $2P \neq P + P$, then $\pos_Q \neq \sq_Q$. Their work also provides a complete classification of the lattice polytopes $Q=2P=P+P$ where $\pos_Q = \sq_Q$ is achieved. 
  
\subsection{Quantitative Aspects of Hilbert's \nth{17} Problem}  

Suppose a Newton polytope $Q$ with $\pos_Q \neq \sq_Q$ is given, and one is interested in quantifying the gap between $\sq_Q$ and $\pos_Q$. For instance, for a given nonnegative polynomial $p \in \pos_Q$ how likely is it to have $p \in \sq_Q$? Greg Blekherman studied this problem in the case $Q=\Delta_{n,2d}$ where  

$$ \Delta_{n,2d} := \{ x \in \mathbb{Z}^n : x_i \geq 0 \; , \;  \sum_i x_i = 2d \} $$

\noindent and he concluded that if $d \geq 2$ is fixed and $n \rightarrow \infty$, for a given $p \in \pos_{\Delta_{n,2d}}$ we have $p \notin \sq_{\Delta_{n,2d}}$ almost surely \cite{Ble06}.

We extend quantitative comparison of the two cones $\pos_Q$ and $\sq_Q$ to the case of {\em multihomogenous} polynomials. We have tried to develop general methods than can allow study of arbitrary Newton polytopes, rather than developing ad hoc methods for multihomogenous case. However, there remains some technicalities to be resolved before we can handle arbitrary polytopes, mainly due to not being able to define the ``correct'' metric structure. So, we content ourselves with the multihomogenous case in this article. 

\begin{dfn}  
Assume henceforth that $n=n_1+\cdots+n_m$ 
and $d=d_1+\cdots+d_m$, with $d_i,n_i\!\in\!\N$ for all $i$, 
and set $N:=(n_1,\ldots,n_m)$ and $D:=(d_1,\ldots,d_m)$. We 
will partition the vector $\ox=(x_1,\ldots,x_n)$ into $m$ 
sub-vectors $\ox_1,\ldots,\ox_m$ so that $\ox_i$ 
consists of exactly $n_i$ variables for all $i$, and say that 
$p\in\R\!\left[\ox\right]$ is {\em $(N,D)$ homogenous} if 
and only if $p$ is\linebreak 
\scalebox{.93}[1]{homogenous of degree $d_i$ with respect to $\ox_i$ for 
all $i$. Finally, let $\Delta_{N,D}:=\Delta_{n_1,d_1}\times \cdots \times \Delta_{n_m,d_m}$. 
\dia} 
\end{dfn}
\begin{ex}
\scalebox{.93}[1]{$p(\ox):=x_1^3x_4^2+x_1x^2_2x_5^2+x^3_3x_4x_5$
is $(N,D)$ homogenous with $N=(3,2)$ and}\linebreak 
\scalebox{.92}[1]{$D=(3,2)$. (So $\ox_1=(x_1,x_2,x_3)$ and 
$\ox_2=(x_4,x_5)$.) In particular, 
$\newt(p)\subseteq \Delta_{N,D}=\Delta_{3,3}\times \Delta_{2,2}$. \dia}  
\end{ex}  

Multihomogenous forms appeared before in the works of several mathematicians. We refer the interested reader to \cite{clr} and references therein for the extensive history of nonnegative multiforms. The following theorem from \cite{clr} is the most relevant to our interest.

\begin{thm}\label{CLRmulti}(Choi, Lam, Reznick)
Let $N=(n_1,n_2,\ldots,n_m)$ and $D=(2d_1,2d_2,\ldots,2d_m)$ where $n_i \geq 2$ and $d_i \geq 1$ then $\pos_{\Delta_{N,D}}=\sq_{\Delta_{N,D}}$ if and only if $m=2$ and 
$(N,D)$ is either $(2,n_2;2d_1,2)$ or $(n_1,2;2,2d_2)$. 
\end{thm}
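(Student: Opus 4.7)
I would split the biconditional into sufficiency and necessity and handle them by very different methods.

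\emph{Sufficiency.} By the symmetry between the two factors it suffices to treat $(N,K)=((2,n_2),(2k_1,2))$. Writing $\ox_1=(x_1,x_2)$ and $\ox_2=(y_1,\ldots,y_{n_2})$ and exploiting that $p$ has degree $2$ in $\ox_2$, every form of type $(N,K)$ admits the unique representation
\[
p(\ox_1,\ox_2) \;=\; \ox_2^{\top}A(\ox_1)\,\ox_2,
\]
where $A(\ox_1)$ is a symmetric $n_2\times n_2$ matrix whose entries are binary forms in $\ox_1$ of degree $2k_1$. Nonnegativity of $p$ is equivalent to $A(\ox_1)\succeq 0$ for every $\ox_1\in\R^2$. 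The crucial lemma is: \emph{every PSD symmetric matrix of binary forms of degree $2k_1$ admits a factorization $A(\ox_1)=L(\ox_1)^{\top}L(\ox_1)$ with $L(\ox_1)$ a matrix of binary forms of degree $k_1$}. I would prove this by dehomogenizing (say setting $x_2=1$) to reduce to a PSD symmetric matrix of univariate polynomials, invoking the classical spectral (Fej\'er--Riesz / Youla / Gohberg--Rodman) factorization of PSD matrix polynomials, and rehomogenizing; keeping track of the degree at infinity forces the factor $L$ to be homogeneous of degree $k_1$. Granted the lemma,
\[
p(\ox_1,\ox_2) \;=\; \|L(\ox_1)\,\ox_2\|^2 \;=\; \sum_i \bigl(L(\ox_1)\,\ox_2\bigr)_i^{\,2}
\]
exhibits $p$ as a sum of squares of forms of type $((2,n_2),(k_1,1))$, so $p\in\sq_{Q_{N,K}}$.

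\emph{Necessity.} For every other parameter configuration I would exhibit a form in $\pos_{Q_{N,K}}\setminus\sq_{Q_{N,K}}$. When $m\geq 3$ I would isolate one variable from each of three distinct blocks and plant a Motzkin-type form in those three variables, multiplying by squares of the remaining variables to land in $Q_{N,K}$; failure to be SOS then follows from Reznick's theorem combined with a support/AM--GM analysis on the lattice points of $\tfrac12 Q_{N,K}$. When $m=2$ with $n_1,n_2\geq 3$ I would begin from a Choi--Lam biquadratic PSD-not-SOS biform of type $((3,3),(2,2))$ and inflate to arbitrary $(n_i,k_i)$ by multiplication with carefully chosen squares, arranged so that the obstruction to a sum-of-squares decomposition survives the inflation. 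When $m=2$ but one block violates the shape constraint (for instance $(n_1,n_2,k_1,k_2)=(2,2,k_1,k_2)$ with $k_1,k_2\geq 2$, or $(2,n_2,k_1,k_2)$ with $n_2\geq 3$ and $k_2\geq 2$), I would design a biform whose dehomogenization in one block recovers a Motzkin-type PSD-not-SOS form in the other, using the binary structure of the first block as a parameter along which the fibers cannot be uniformly SOS.

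\emph{Main obstacle.} The principal technical hurdle is the matrix-polynomial factorization lemma underlying the sufficiency direction; ensuring that the factor $L$ is a genuinely homogeneous matrix of binary forms of the prescribed degree (rather than merely a univariate-polynomial matrix obtained after dehomogenization) requires careful control of the behavior at infinity and of the parity of degrees in each direction. For the necessity direction, the subtle point is that multiplying a PSD-not-SOS form by a positive form can destroy the non-SOS property (as with Motzkin $\times\,(x_1^2+x_2^2+x_3^2)^k$ for large $k$), so the constructions in every case must be engineered so that the Newton-polytope obstruction to a sum-of-squares representation is preserved throughout the inflation.
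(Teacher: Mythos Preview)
The paper does not contain a proof of this theorem; it is quoted as a result of Choi, Lam and Reznick with a citation to \cite{clr}, and the paper's own contribution (Theorem~\ref{thm:main}) is presented as a quantitative companion to it. There is therefore no proof in the paper to compare your proposal against.

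That said, your outline is a reasonable route to the result and broadly in the spirit of the original. For sufficiency, reducing the quadratic-in-$\ox_2$ case to a matrix-valued spectral factorization of a PSD symmetric matrix of binary forms is the natural approach; the genuine work, as you correctly flag, is ensuring that the factor $L$ comes out homogeneous of the right degree after rehomogenizing, and that the factorization can be taken over $\R$ (possibly at the cost of a rectangular $L$, which is harmless for an SOS conclusion). For necessity, planting Motzkin-type and Choi--Lam biquadratic forms and inflating by squares is the standard method, and you have identified the real hazard: multiplication by positive forms can destroy the non-SOS property, so each construction must be arranged so that the Newton-polytope (Reznick) obstruction survives. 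Your $m=2$ case split is essentially the right one, though in a full write-up you would want to enumerate the ``bad'' parameter configurations explicitly to be sure nothing slips through.
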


Our result can be viewed as a quantitative version of the theorem of Choi, Lam and Reznick. In order to state the main result we need to introduce the following function on subsets of $P_{N,D}$. 

\begin{dfn}
For a fixed partition, $(N,D)$ with $n=n_1+n_2+\ldots+n_m$, and $2d=2d_1+2d_2+ \ldots + 2d_m$. Let $S^{n_i-1}$ denote the standard unit $(n_i-1)$-sphere in $\R^{n_i}$, and let $\sigma_i$ be the uniform measure on $S^{n_i-1}$ with $\sigma_{i}(S^{n_i-1})=1$. We define  $S:=S^{n_1-1}\times \cdots \times S^{n_m-1}$, and let $\sigma := \sigma_1 \times \ldots \times \sigma_{m}$ be the product measure on $S$.

$$ P_{N,D}:=\{p \in \R\!\left[\ox\right] \text{ homogeneous of  type } (N,D)\} $$

\noindent and  
$$ C_{N,D}:=\left\{p \in P_{N,D} \; | \; \int_S p \: d\sigma=1\right\} $$ 

\noindent For any $X\!\subseteq\! P_{N,K}$  we set 

$$\mu(X)= \left( \frac{vol(X \cap C_{N,K})}{vol(B)} \right)^{\frac{1}{\dim(P_{N,D})}} $$

\noindent where $B$ is the unit ball with respect to the $L_2$ inner product introduced in the third section.
\end{dfn}

For the case of the polytope $\Delta_{N,D}=\Delta_{n_1,2d_1} \times \ldots \times \Delta_{n_m,2d_m}$, there is yet another family of non-negative polynomials that are easy to manipulate:
 
$$ L_{\Delta_{N,D}}:=\{ p \in \pos_{\Delta_{N,D}} : p=\sum_{i} l_{i1}^{2d_1} l_{i2}^{2d_2} \cdots l_{im}^{2d_m} \; \text{where} \; l_{ij} \; \text{are linear forms in} \; \ox_j \}  $$

\noindent Now we can state the main result of this paper.  

\begin{thm} \label{thm:main} 
Let $N=(n_1,n_2,\ldots,n_m)$ and $D=(2d_1,2d_2,\ldots,2d_m)$, then the following bounds hold.
  
$$  \frac{1}{4\sqrt{\max_i n_i}}  \prod_{i=1}^m (2d_i+1)^{-\frac{1}{2}} \leq  \mu(\pos_{\Delta_{N,D}}) \leq c_o $$
$$ c_1  \pi^{-2d} \prod_{i=1}^m c^{-\frac{d_i}{2}} \left( \frac{n_i}{2} + 2d_i\right)^{-\frac{d_i}{2}}  \leq  \mu(\sq_{\Delta_{N,D}}) \leq c_2 \prod_{i=1}^m (\frac{n_i+d_i}{cd_i})^{-\frac{d_i}{2}} $$
 $$ c_1 \pi^{-2d}  \prod_{i=1}^m (\frac{n_i}{2}+2d_i)^{-d_i} \leq  \mu(L_{\Delta_{N,D}}) \leq  4 \pi^{-2d}  \sqrt{\max_i n_i}  \prod_{i=1}^m (2d_i+1)^{\frac{1}{2}} (\frac{n_i}{2d_i})^{-d_i} $$
where $c_i$ are absolute constants with $1 \leq c_o \leq 5 $, $0 \leq c_1 \leq 1$, $1 \leq c_2 \leq 4$ and $c=2^{10} e$.
\end{thm} 

For the special case $D=(2,2,\ldots,2)$ we have the following corollary.

\begin{cor} \label{funQ}
Let $N=(n_1,n_2,\ldots,n_m)$ and $D=(2,2,\ldots,2)$, then the following bound holds.
$$  c_4  \prod_{i=1}^m \pi^{-2} c^{-\frac{1}{2}} (\frac{n_i+4}{2})^{-\frac{1}{2}} \leq \frac{\mu(\sq_{\Delta_{N,D}})}{\mu(\pos_{\Delta_{N,D}})}  \leq 4c_2 \sqrt{\max_i n_i} \prod_{i=1}^m (\frac{n_i+1}{3c})^{-\frac{1}{2}} $$
\noindent where $c_4=\frac{c_1}{5}$, $c$ and $c_2$ are absolute constants as in the main theorem.
\end{cor}

\noindent Corollary \ref{funQ} happens to have some interesting implications in quantum information theory. We have recently learned that  Klep, McCullough, Sivic and Zalar obtained similar bounds to Corollary \ref{funQ} with more explicit constants \cite{Klep}. We refer the interested reader to \cite{Klep} for an exposition of the connection to  quantum information theory, and for a very detailed analysis of the bounds in Corollary \ref{funQ}.

 To compare our result with Blekherman's bounds \cite{Ble06} let us  consider an even further special case of Corollary \ref{funQ}.
\begin{cor} \label{demonstrate}
Assume  $n=d.n_1$, and we have the following partition: $N=(n_1,n_1, \ldots, n_1)$ and $D=(2,2, \ldots, 2)$. Theorem \ref{thm:main}, gives the following bounds:

$$  c_1 \pi^{-2d} c^{-\frac{d}{2}} (2+\frac{n}{2d})^{-\frac{d}{2}} \leq  \frac{ \mu\!\left(\sq_{\Delta_{N,D}} \right)}{\mu\!\left(\pos_{Q_{N,D}}\right)} \leq  c_2 (\frac{n}{cd})^{\frac{-d+1}{2}}   $$ 
where $c$, $c_1$ and $c_2$ are absolute constants.
\end{cor}

\noindent 
Note that the Newton polytope considered in the case above is contained in $\Delta_{n,2d}$. In particular, Blekherman's Theorems 4.1 and 6.1 from \cite{Ble06}  give the following estimates: 
$$  \frac{n^{\frac{d+1}{2}}}{(\frac{n}{2}+2d)^d} \frac{c_1 d! (d-1)!}{4^{2d} (2d)!}  \leq \frac{\mu\!\left(\sq_{\Delta_{n,2d}}\right)} {\mu\!\left(\pos_{\Delta_{n,2d}}\right)} \leq  \frac{ c_2 4^{2d} (2d)! \sqrt{d} }{d!}  n^{\frac{-d+1}{2}} $$
where $c_1$ and $c_2$ are absolute constants. 

Bounds in Corollary \ref{demonstrate} depends on $\frac{n}{d}$ instead of $n$ which shows the effect of underlying multihomogeneity. In particular in the cases that $d$ and $n$ are comparable our bounds behave significantly differently then the bounds of Blekherman.  

As a high level summary, Theorem \ref{thm:main} proves that if we assume multihomogeneity on the set of variables $\ox_1,\ox_2,\ldots,\ox_m$, bounds derived in Blekherman's work for the ratio of sums of squares to non-negative polynomials, repeats itself for every set of variable $\ox_j$. 

The rest of the article is structured as follows; we first review background material from convex geometry and analytic theory of polynomials. Then, we introduce two inner products on multihomogenous forms and study relations between them. These two inner products introduce two different notions of duality, which turns out to be both very useful. After these preliminary sections, we have three sections that provide bounds for $\mu(\pos_{\Delta_{N,D}})$, $\mu(\sq_{\Delta_{N,D}})$ and $\mu(L_{\Delta_{N,D}})$ respectively. 

\section{Background Material}

\subsection{Convex Geometric Analysis}

We begin with recalling a theorem of Fritz John \cite{john}.

\begin{thm}(John's Theorem) \label{john}
Every convex body $K \subset \mathbb{R}^n$ is contained in a unique ellipsoid of the minimal volume $E_{min}$.  Moreover, 

$$ \frac{1}{n} E_{min}  \subset K \subset E_{min} .$$

\noindent The minimal volume ellipsoid $E_{min}$ is the Euclidean unit ball $B_2^{n}$ if and only if the following conditions are satisfied:  $K \subseteq B_2^n$, there are unit vectors $(u_i)_{i=1}^m$ on the boundary of $K$ and positive real numbers $c_i$ such that

$$ \sum_{i=1}^m c_i u_i=0$$

\noindent and for all $x \in \mathbb{R}^n$ we have
$$ \sum_i c_i \langle u_i , x \rangle^2 = \norm{x}_2^2$$

\noindent For the convex bodies $K$ with $E_{min}=B_2^{n}$, we say $K$ is John's position.
\end{thm}

The criterion in Theorem \ref{john} for John's position is called John's decomposition of identity. One way to view John's decomposition is to observe that the family of unit vectors $\{ u_i \}_{i=1}^{m}$ work like an  orthogonal basis in $\mathbb{R}^{n}$. Painless non-orthogonal decompositions like John's decomposition has long been studied in applied mathematics. The lemma below is a standard fact from frame theory, and is included here for completeness.

\begin{lem} \label{frame}
We denote the map that sends $x$ to $\langle  x , y \rangle z$ by $ y \otimes z$. Then the following are equivalent

\begin{enumerate}
\item   
$$ \mathbb{I}=\sum_{i} c_i u_i \otimes u_i $$
\item  For every $x \in \mathbb{R}^n$
$$ x=\sum_{i} c_i \langle x,u_i \rangle u_i $$
\item For every $x \in \mathbb{R}^n$ 
$$ \sum_i c_i \langle u_i , x \rangle^2 = \norm{x}_2^2$$

\end{enumerate}
\end{lem}

Another perspective on John's decomposition is to view the decomposition as a discrete measure supported on the vectors $u_i$ with weights $c_i$, and the identity being the covariance matrix of the measure. This measure theoretic interpretation is formalized in the notion of isotropic measures which we present below. 

\begin{dfn}
A finite Borel measure $Z$ on the sphere $S^{n-1}$ of a $n$ dimensional real vector space $V$ is  said to be isotropic if 

$$   \norm{x}_2^2 = \int_{S^{n-1}}  \langle x ,u \rangle^2 Z(u)  $$

\noindent for all $x  \in V$. Moreover, we define the centroid of a measure $Z$ supported on the sphere $S^{n-1}$ as 

$$ \frac{1}{Z(S^{n-1})} \Int_{S^{n-1}} u \; Z(u)  .$$

\noindent We say the measure is centered at $0$ if the centroid is the origin.
\end{dfn}

An isotropic measure supported on the sphere with centroid $0$ is the continuous analog of John's decomposition. It is also known that a convex body is in John's position if and only if the touching points of the convex body to the unit ball supports an isotropic measure with centroid at the origin \cite{petros}. 

Suppose that a convex body $K$ is given where $E_{min}$ is $B_2^n$. That is, the touching points of $K$ to $B_2^n$ is the support of an isotropic measure. Now, let $\widetilde{K}$ be the convex hull of these touching points. By John's Theorem, $K$ and $\widetilde{K}$ have the same minimal volume ellipsoid. However, the volume of $K$ and the volume of $\widetilde{K}$ can differ up to $n$. 

We observe in this article that for an interesting convex body $K$ which is dual to the cone of nonnegative polynomials, we have $\widetilde{K}= K$. It seems that for this special case, we need refined estimates instead of using John's Theorem. Thankfully such improved estimates has already been worked out by Lutwak, Yhang and Zhang.  We need to introduce one more definition to state the their result. For a convex body $K \subseteq \mathbb{R}^n$ the polar of $K$ denoted by $K^{\circ}$ is defined as follows:

$$ K^{\circ}:= \{ x \in \mathbb{R}^n : \langle x , y \rangle \leq 1 \; \text{for all} \; y \in K \} $$

\begin{thm} [Lutwak, Yhang, Zhang] \label{LYZ} \cite{LYZ07} 
If $Z$ is an isotropic measure on $S^{n-1}$ whose centroid is at the origin and $Z_{\infty}=\conv(\supp(Z))$, then we have 
$$\abs{Z_{\infty}^{\circ}} \leq \frac{n^{\frac{n}{2}} (n+1)^{\frac{n+1}{2}}}{n!} $$  

\noindent where $\abs{Z_{\infty}^{\circ}}$ denotes the volume of $Z_{\infty}^{\circ}$.
\end{thm}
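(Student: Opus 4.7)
The plan is to rewrite the polar volume as an exponential integral, reduce via compactness to finitely supported isotropic measures, and then apply an asymmetric Brascamp--Lieb inequality to single out the regular simplex as the extremal configuration.

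First, I would invoke the standard polar-coordinate identity
$$\vol{Z_\infty^{\circ}} \;=\; \frac{1}{t!}\int_{\R^{t}} e^{-h_{Z_\infty}(x)}\, dx,$$
where $h_{Z_\infty}(x)=\sup_{y\in\supp(Z)}\langle x,y\rangle$ is the support function of $Z_\infty$. This follows by writing $x=r\omega$ in polar coordinates and using $\int_{0}^{\infty} r^{t-1}e^{-r}dr=(t-1)!$, once one notes that $\{x:h_{Z_\infty}(x)\le r\}=r\cdot Z_\infty^{\circ}$. The theorem thus reduces to the analytic estimate $\int_{\R^{t}} e^{-h_{Z_\infty}(x)}\,dx \le t^{t/2}(t+1)^{(t+1)/2}$.

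Second, I would discretise $Z$. The set $\mathcal{I}_t$ of isotropic measures on $S^{t-1}$ with centroid at the origin is convex and weak-$*$ compact, because the defining moment conditions $\int y\,dZ=0$ and $\int yy^{\top}\,dZ=I$ are continuous under weak-$*$ limits on the compact set $S^{t-1}$. The functional $Z\mapsto\vol{Z_\infty^{\circ}}$ is monotone decreasing under enlargement of $Z_\infty$, so any convex combination $Z=\lambda Z_1+(1-\lambda)Z_2$ satisfies $\vol{Z_\infty^{\circ}}\le \min_i \vol{Z_{i,\infty}^{\circ}}$; hence its supremum is attained at extreme points of $\mathcal{I}_t$. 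A Tchakaloff-type argument on the $t+\binom{t+1}{2}$ moment equations then shows that these extreme measures are supported on at most $t(t+3)/2+1$ points.

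Third, for such a discrete $Z=\sum_{i}c_i\,\delta_{u_i}$ the isotropy condition $\sum c_i u_iu_i^{\top}=I$ is precisely the Ball--Barthe geometric Brascamp--Lieb condition, and the centroid condition $\sum c_iu_i=0$ forces $\sum c_i\langle x,u_i\rangle=0$ for every $x$. Applying
$$\int_{\R^{t}}\prod_{i}f_i(\langle x,u_i\rangle)^{c_i}\,dx \;\le\; \prod_{i}\Big(\int_{\R} f_i\Big)^{c_i}$$
to a one-parameter family of truncated exponentials $f_i(s)=e^{-s}\mathbf{1}_{(-\beta,\infty)}(s)$, optimising $\beta$, and combining with a Lagrange multiplier analysis on $\mathcal{I}_t$ that pins the extremiser to the regular-simplex configuration $\langle u_i,u_j\rangle=-1/t$ with weights $c_i=t/(t+1)$, yields the desired bound: a direct calculation in this configuration gives $\vol{Z_\infty^{\circ}}=t^{t/2}(t+1)^{(t+1)/2}/t!$.

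The main obstacle is the calibration in the third step. A naive choice like $f_i(s)=e^{-s}$ only produces the loose dimensional bound $e^{t}$; to extract the sharp Stirling-type factor $t^{t/2}(t+1)^{(t+1)/2}$ one must either tune the $f_i$ so that equality in the Brascamp--Lieb inequality occurs exactly at the regular simplex, or bypass Brascamp--Lieb and run the extremal analysis directly on $\mathcal{I}_t$, exploiting that the regular simplex is the unique (modulo $O(t)$) isotropic, centered, $(t+1)$-point configuration on $S^{t-1}$ to identify it as the unique critical point of $Z\mapsto \vol{Z_\infty^{\circ}}$. The asymmetry (absence of the $-Z=Z$ hypothesis that made cubes extremal in Ball's classical result) is exactly what makes this calibration delicate.
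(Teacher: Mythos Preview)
The paper does not prove this theorem: it is quoted from \cite{LYZ07} and used as a black box in the proof of Lemma~\ref{nice}. So there is no ``paper's own proof'' to compare against; what you have sketched is essentially the strategy of the original Lutwak--Yang--Zhang paper (polar volume as an exponential integral, then a Ball--Barthe/Brascamp--Lieb inequality exploiting isotropy together with the centroid constraint $\int u\,dZ=0$).

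That said, your proposal is a plan, not a proof, and it has two soft spots. First, the discretisation in your step~2 is shakier than it looks: the map $Z\mapsto\vol{Z_\infty^{\circ}}$ is \emph{lower} semicontinuous under weak-$*$ convergence (support can only shrink in the limit, so $Z_\infty^{\circ}$ can only grow), not upper semicontinuous, so your compactness-plus-extreme-point argument does not directly deliver a maximiser. In the original LYZ proof this step is simply absent: the Ball--Barthe inequality is applied in its continuous (measure) form, and the simplex emerges as the equality case rather than from a separate variational argument. Second, and more seriously, you explicitly flag the calibration of the $f_i$ in step~3 as ``the main obstacle'' and do not resolve it. That calibration is the entire content of the theorem: one uses the centroid identity $\int \langle x,u\rangle\,dZ(u)=0$ to insert a free parameter into the exponent, applies Ball--Barthe, and then optimises that parameter to produce exactly $t^{t/2}(t+1)^{(t+1)/2}$. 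Until you carry that computation through, what you have is the correct architecture but not a proof.
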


We will need few other results from classical and modern convexity. Here is a bit of terminology: For a convex body $K \subseteq \mathbb{R}^n$, the support function $h_K$ is given by 

$$ h_K(u)=\max_{x \in K} \langle x, u \rangle  $$

\noindent The difference $\omega_K(u)=h_K(u)+h_K(-u)$ is the width of $K$ in direction $u$. The mean width of $K$ is defined as follows. 

$$ \omega(K) := \Int_{S^{n-1}} \frac{\omega_K(u)}{2} \:  \sigma(u) = \Int_{S^{n-1}} h_{K}(u) \: \sigma(u) $$ 

\noindent Now, we can state Urysohn's inequality  \cite{petros}.

\begin{thm} [Urysohn] \label{urysohn} For every convex body $K$ in $\mathbb{R}^n$, we have
$$ \left( \frac{\abs{K}}{\abs{B_2^n}} \right)^{\frac{1}{n}}  \leq \omega(K) $$
\end{thm}

\noindent A very much related quantity to the support function of convex body $K$ is the Gauge function $G_K$ of $K$.

$$ G_K(u) := inf \{ \lambda : \lambda u  \in K \}  $$

\begin{lem} \label{standard} \cite{pisier} For a convex body $K \subseteq \mathbb{R}^n$ with $0 \in K$, the ratio of the volume of $K$ to the unit ball $B_2^n$ can be expressed as follows.
$$  \left( \frac{\abs{K}}{\abs{B_2^n}} \right)^{\frac{1}{n}} = \left( \int_{S^{n-1}} \abs{G_K(u)}^{-n}  \; \sigma(u) \right)^{\frac{1}{n}} $$
\end{lem}

\noindent We have two more theorems to present in this section: the Santalo inequality, and the reverse Santalo inequality of Bourgain and Milman \cite{BouMil87}.

\begin{thm} [Bourgain-Milman] \label{bigboe}
There exists an absolute constant $c_s >0$ such that for any convex body $K \subseteq \mathbb{R}^n$ that includes the origin, we have

$$ c_s \leq \left( \frac{\abs{K}}{\abs{B_2^n}} \right)^{\frac{1}{n}} \left( \frac{\abs{K^{\circ}}}{ \abs{B_2^n}} \right)^{\frac{1}{n}} $$ 

\end{thm}

We must note that Bourgain-Milman paper \cite{BouMil87} proves Theorem \ref{bigboe} above for symmetric convex bodies. The conclusion for all convex bodies $K$ is a corollary of Bourgain-Milman theorem applied to difference body of $K$ together with the usage Rogers-Shephard inequality. Throughout the article we will keep this constant as $c_s$ referring to reverse Santalo inequality.

\begin{thm}[Santalo Inequality] \label{Santalo}
For every convex body $K \subseteq \mathbb{R}^n $ there exists a unique point $z$ in the interior of $K$, with the following extremal property:

$$ \abs{(K-z)^{\circ}} = \min_{x \in K}  \abs{ (K-x)^{\circ} }  .$$

\noindent This unique point is called the Santalo point of $K$, and for this particular point $z$ we have the following inequality.

$$ (\frac{\abs{K}}{\abs{B_2^n}})^{\frac{1}{n}} (\frac{\abs{(K-z)^{\circ}}}{\abs{B_2^n}})^{\frac{1}{n}}  \leq 1 $$

\noindent where $B_2^n$ is the Euclidean unit ball of $\mathbb{R}^n$.
\end{thm}

\subsection{Harmonic Polynomial Basics}

In this section we will present some basic results from analytic theory of polynomials. These results can be found in any textbook on the subject, we suggest \cite{sh} for a down to earth presentation with elementary proofs. We start by defining the Laplace operator. 

$$ \mathcal{L}: P_{n,2d} \rightarrow \mathbb{R}  \; \; , \; \; \mathcal{L}(f) = \frac{\partial^{2} f}{\partial x_1^2} + \frac{\partial^{2} f}{\partial x_2^2} + \ldots + \frac{\partial^{2} f}{\partial x_n^2} $$

\noindent We say a polynomial  $f \in P_{n,2d}$ is harmonic if $\mathcal{L}(f)=0$, and denote the vector space of harmonic degree $2d$ forms with $H_{n,2d}$. The following is well-known: for every $p \in P_{n,2d}$, there exists a unique decomposition $p=\sum_{i=1}^d (x_1^2+x_2^2+\ldots+x_n^2)^{i} h_i$ where $h_i \in H_{n,2d-2i}$. We will explain this decomposition in the context of multihomogenous forms in the next section. 

For a given point $v \in S^{n-1}$, we consider the following pointwise evaluation map on $H_{n,2d}$.

$$ l_v : H_{n,2d} \rightarrow \mathbb{R} \; \;, \;  \;  l_v(h)=h(v) $$

\noindent It is well-known that for every point $v \in S^{n-1}$, there exist a polynomial $q_v \in H_{n,2d}$ called zonal harmonic, that satisfies the following equality for all $h \in H_{n,2d}$.

$$ l_v(h)= h(v) = \int_{S^{n-1}} h(x) q_v(x) \; \sigma(x) $$ 

\noindent where $\sigma$ is the uniform measure on $S^{n-1}$ with $\sigma(S^{n-1})=1$. Zonal harmonics have quite nice properties as we summarize below. 
\begin{lem} \label{b1}

\begin{enumerate}
\item For all $T \in O(n)$ with $T(v)=v$ and for all $x \in S^{n-1}$,  we have

 $$ q_v(T(x))=q_v(x) $$ 

\noindent Conversely, if a form $h \in H_{n,2d}$ satisfies $h(Tx)=h(x)$ for all $T \in O(n)$ with $T(v)=v$, then $h=c q_v$ for a constant $c$.

\item There exists a univariate polynomial $P_{n,2d}$ called ultraspherical polynomial, with the following property:

$$  q_v(x) = q_v(v) P_{n,2d}( \langle x , v \rangle )  $$

\noindent for all $v \in S^{n-1}$.

\item The ultraspherical polynomial $P_{n,2d}$ satisfies Rodrigues' formula; that for all $f \in C^{(n)}[-1,1]$, we have

$$ \int_{-1}^1 f(t) P_{n,2d}(t) (1-t^2)^{\frac{n-3}{2}} \; \mathrm{d} t = \frac{\Gamma(\frac{n-1}{2})}{2^{2d} \Gamma(2d+\frac{n-1}{2})}  \int_{-1}^{1} f^{(2d)}(t)(1-t^{2})^{2d+\frac{n-3}{2}} \; \mathrm{d} t$$

\item The following holds for all $h \in H_{n,2d}$, and for all $x \in S^{n-1}$:

$$  h(x) = \int_{S^{n-1}} h(v) q_v(x) \; \sigma(v)  $$
 
\noindent where $\sigma$ is the uniform measure on the sphere with $\sigma(S^{n-1})=1$. \dia
\end{enumerate}  
\end{lem}

There are several immediate consequences of  Lemma \ref{b1}; combining second and fourth item one can deduce that $q_w(v)=q_v(w)$ and that $q_v(v)=q_w(w)$ for all $v,w \in S^{n-1}$. This in turn implies $q_v(v)=\dim(H_{n,2d})$, and $P_{n,2d}(1)=1$. Now, we present a result attributed to Hecke and Funk. 

\begin{thm}[Hecke-Funk Formula]
Let $K$ be a measurable function on $[-1,1]$ where the integral

$$ \int_{-1}^1 \abs{K(t)} (1-t^2)^{\frac{n-3}{2}} \; \mathrm{d}t  $$

\noindent is finite. Then, for all $h \in H_{n,2d}$ and any $x \in S^{n-1}$, we have

$$ \int_{S^{n-1}} K(\langle x , v \rangle )  h(v) \; \sigma(v) = \left( \frac{\abs{S^{n-2}}}{\abs{S^{n-1}}} \int_{-1}^1 K(t) P_{n,2d}(t) (1-t^2)^{\frac{n-3}{2}} \; \mathrm{d}t \right) h(x) $$

\noindent where $\sigma$ is the uniform measure on $S^{n-1}$ with $\sigma(S^{n-1})=1$, and $P_{n,2d}$ is the ultraspherical polynomial as in the Lemma \ref{b1}.

\end{thm}

\begin{proof}[Sketch of the Proof]
If $h=q_w$ for some $w \in S^{n-1}$, then we have

$$ F(x) = \int_{S^{n-1}} K(\langle x , v \rangle )  q_w(v) \; \sigma(v) = q_w(w) \int_{S^{n-1}}  K(\langle x , v \rangle )  P_{n,2d}(\langle w , v \rangle) \; \sigma(v)  $$

From this expression it is clear that for any $T \in SO(n)$ with $T(w)=w$, we have $F(Tx)=F(x)$. By first item in Lemma \ref{b1}, we have $ F(x) = c q_w(x) $. Using the special case $w=x$, gives 

$$c = \frac{\abs{S^{n-2}}}{\abs{S^{n-1}}} \int_{-1}^1 K(t) P_{n,2d}(t) (1-t^2)^{\frac{n-3}{2}} \; \mathrm{d}t  $$

\noindent Now, for any $h \in H_{n,2d}$ we have

$$ \int_{S^{n-1}} K(\langle x , v \rangle )  h(v) \; \sigma(v)  = \int_{S^{n-1}} \int_{S^{n-1}} K(\langle x , v\rangle)  h(w) q_v(w) \; \sigma(w) \; \sigma(v)   $$ 

Applying Fubini's Theorem, and using $q_v(w)=q_w(v)$ completes the proof.
\end{proof}

We will use the following two identities in the coming proof.  

$$ \abs{S^{n-1}} \int_{S^{n-1}} x_i^{2d} \; \sigma(x)=\frac{2 \Gamma(d+\frac{1}{2}) }{ \Gamma(d+\frac{n}{2})}  $$

$$ \sqrt{\pi} \Gamma(x) = 2^{x-1} \Gamma(\frac{x+1}{2}) \Gamma(\frac{x}{2}) $$

The first identity can be found in \cite{Bar02}, the second is folklore. In what follows we apply Hecke-Funk formula to $K(t)=t^{2d}$ and a harmonic polynomial $h$ with degree $2k$ for $k \leq d$.

\begin{lem} \label{pr}
Let $h \in H_{n,2k}$, then we have

$$ \int_{S^{n-1}} \langle v , w \rangle^{2d} h(v) \; \sigma(v) =  A_1 \pi^{-2k} \left( \frac{ d! \Gamma(d+ \frac{n}{2}) }{ (d-k)! \Gamma(d+k+\frac{n}{2})}  \right) h(w) $$

\noindent where $A_1 = \int_{S^{n-1}} x_i^{2d} \; \sigma(x)$.
\end{lem}

\begin{proof}

By Hecke-Funk Theorem, we have

$$ \int_{S^{n-1}} \langle v , w \rangle^{2d} h(v) \; \sigma(v)  =  \left( \frac{\abs{S^{n-2}}}{\abs{S^{n-1}}}  \int_{-1}^1 t^{2d} P_{n,2k}(t) (1-t^2)^{\frac{n-3}{2}} \; \mathrm{d}t \right) h(w)$$

By Rodrigues' formula, we have

$$ \int_{-1}^1 t^{2d} P_{n,2k}(t) (1-t^2)^{\frac{n-3}{2}} \; \mathrm{d}t  = \frac{(2d)! \Gamma(\frac{n-1}{2})}{2^{2k} (2d-2k)! \Gamma(2k+\frac{n-1}{2})}  \int_{-1}^{1} t^{2d-2k}(1-t^{2})^{2k+\frac{n-3}{2}} \; \mathrm{d} t $$

The right hand side of the integral can be interpreted as integrating $x_n^{2d-2k}$ over $4k+n-1$ sphere:

$$ \abs{S^{4k+n-1}} \int_{S^{4k+n-1}} x_n^{2d-2k} \; \sigma(x)  =  \abs{S^{4k+n-2}} \int_{-1}^{1} t^{2d-2k} (1-t^{2})^{2k+\frac{n-3}{2}} \; \mathrm{d} t  $$

If we plug-in these equations starting from $\int_{S^{n-1}} \langle v , w \rangle^{2d} h(v) \; \sigma(v) $, we have

$$ = \frac{\abs{S^{n-2}}}{\abs{S^{n-1}}} \frac{(2d)! \Gamma(\frac{n-1}{2})}{2^{2k} (2d-2k)! \Gamma(2k+\frac{n-1}{2})} \left( \frac{2 \Gamma(d-k+\frac{1}{2}) }{ \abs{S^{4k+n-2}} \Gamma(d+k+\frac{n}{2})}  \right) $$

We use  $\abs{S^{n-1}} \Gamma(\frac{n}{2}) =2\pi^{\frac{n}{2}}$. 

$$ = \frac{(2d)!}{2^{2k} (2d-2k)!  \pi^{2k+\frac{n}{2}} } \left(  \frac{ \Gamma(\frac{n}{2}) \Gamma(d-k+\frac{1}{2}) }{\Gamma(d+k+\frac{n}{2})}  \right)  $$ 

We use the second identity  $\sqrt{\pi} \Gamma(2d-2k+1) =2^{2d-2k} \Gamma(d-k+1) \Gamma(d-k+\frac{1}{2})$.

$$ = \frac{(2d)!}{2^{2d} \pi^{2k+\frac{n-1}{2}} } \left( \frac{\Gamma(\frac{n}{2}) }{ \Gamma(d-k+1) \Gamma(d+k+\frac{n}{2})}  \right)  = \frac{d!}{\pi^{2k+\frac{n}{2}}} \left( \frac{ \Gamma(d+\frac{1}{2}) \Gamma(\frac{n}{2}) }{ \Gamma(d-k+1) \Gamma(d+k+\frac{n}{2})}  \right) $$ 

Where we also used $\sqrt{\pi} (2d)! = 2^{2d} \Gamma(d+1) \Gamma(d+\frac{1}{2})$. Now we use the first identity for $A_1$.

$$A_1= \int_{S^{n-1}} x_i^{2d} \; \sigma(x) = \frac{2 \Gamma(d+\frac{1}{2}) }{ \abs{S^{n-1}} \Gamma(d+\frac{n}{2})} = \frac{\Gamma(\frac{n}{2}) \Gamma(d+\frac{1}{2}) }{ \pi^{\frac{n}{2}} \Gamma(d+\frac{n}{2})} $$

\noindent This completes the proof.
\end{proof}

\section{A Tale of Two Inner Products on Multihomogenous Forms}

For a fixed partition $N=(n_1,n_2,\ldots,n_m)$ with $n=n_1+n_2+\ldots+n_m$, and $D=(2d_1,2d_2, \ldots, 2d_m)$ with $2d=2d_1+2d_2+ \ldots + 2d_m$, we have defined $P_{N,D}$ to be the vector space of $n$-variate degree $2d$ forms that are $(N,D)$ homogenous. In this section we will introduce two inner products on $P_{N,D}$, and compare the geometry introduced by these two inner products. Let us also recall that we defined $S:=S^{n_1-1}\times \cdots \times S^{n_m-1}$. In the rest of the article, we let $\sigma_i$ to be the uniform measure on $S^{n_i-1}$ with $\sigma_i(S^{n_i-1})=1$, and let  $ \sigma = \sigma_1 \times \sigma_2 \times \ldots \times \sigma_m $  be the product measure on $S$. 

We will naturally consider the action of $O(n_1) \times O(n_2) \times \ldots O(n_m)$ on $P_{N,D}$. So, in short we denote this group with $O(N)$. Similarly we denote $SO(n_1) \times SO(n_2) \times \ldots \times SO(n_m)$ with $SO(N)$. For an element $U \in O(N)$, and $f \in P_{N,D}$ the action of $U$ on $f$ is defined by  $  U \circ f (x) := f (U^{-1} x)  $.

\begin{dfn}[Two Inner Products]
For $f,g \in P_{N,D}$, we define $L_2$ inner product as 

$$ \langle f,g \rangle := \Int_{S}{f(v)g(v)} \: \sigma(v)  $$

\noindent For $f(x)= \sum_\alpha c_\alpha x^{\alpha} \in P_{N,D}$ with 
$\alpha=(\alpha_1,\ldots,\alpha_n)$ we define the linear differential operator 
$$ D[f]:=\sum_\alpha c_\alpha \left(\frac{\partial^{\alpha_1}}{\partial x_1^{\alpha_1}} \cdots \frac{\partial^{\alpha_n}}{\partial x_n^{\alpha_n}} \right) $$
\noindent and set 

$$ \langle f,g \rangle_D:=D[f](g) $$

\noindent This way of defining $\langle f,g \rangle_D$,  introduces an inner product which we call the  ``differential'' inner product. \dia 
\end{dfn} 

We will list below some basic properties of differential inner product. First, for all $v \in S$ we define a corresponding useful form $\delta_v \in P_{N,D}$ as follows:

$$ \delta_v(x) := \langle \ov_1 , \ox_1 \rangle^{2d_1} \langle \ov_2 , \ox_2 \rangle^{2d_2} \ldots \langle \ov_m, \ox_m \rangle^{2d_m} $$

\begin{lem} \label{differential}
\begin{enumerate}
\item For all $p \in P_{N,D}$,  we have

$$\langle p , \delta_v \rangle_D = (2d_1)!(2d_2)! \ldots (2d_m)! p(v) $$

\item For all $p \in P_{N,D}$, and all $n$-variate forms $g,h$ with $gh \in P_{N,D}$, we have

$$ \langle p , gh \rangle_D = \langle D[g](p) , h \rangle_D = \langle D[h](p) , g \rangle_D  $$

\end{enumerate}
\end{lem}

\noindent Now, we define an operator $T$ which captures the relation between the differential and the $L_2$ inner products. The analog of this operator on $P_{n,2d}$ is attributed to Reznick \cite{Rez}.  

\begin{dfn}[T-operator]
$$ T:P_{N,D} \rightarrow P_{N,D}  \; \; , \; \;  T(f)= A^{-1} \int_S f(v) \delta_ v \; \sigma(v)   $$

\noindent $A=A_1 A_2 \ldots A_m$, and $A_i = \int_{S^{n_i-1}} \langle \ov_i , \ox_i \rangle^{2d_i} \; \sigma_i(x) $ for any vector $v=(\ov_1,\ldots, \ov_m) \in S$. \dia
\end{dfn}
The rationale for the constant $A$ is the following: Let $r_1 = x_1^2 + x_2^2 + \ldots + x_{n_1}^2 $, and let $r_i$ for $1 \leq i \leq m$ be defined similarly. Also let $r=r_1^{d_1} r_2^{d_2} r_3^{d_3} \ldots r_m^{d_m}$. We observe two things: $r(v)=1$ for all $v \in S$, and $r$ is fixed under the action of $O(N)$. So, it would be nice to have $T(r)=r$, which is equivalent to have $T(r)(x)=1$ for all $x \in S$. Let $x \in S$, then
 $$  T(r)(x) = A^{-1} \int_{S} \delta_v(x) \; \sigma(v) = A^{-1} \prod_{i=1}^m \int_{S^{n_i-1}} \langle \ov_i , \ox_i \rangle^{2d_i} \; \sigma_i(x) = 1  $$

\noindent Also note that $A_i$ can be written explicitly as we explained in the previous section. 

\begin{lem} \label{switch}
For all $f,g \in P_{N,D}$, we have  $ \langle T(f) , g \rangle_D = A^{-1} \prod_{i=1}^m (2d_i)!  \langle f , g \rangle  $
\end{lem}

\begin{proof}
$$  \langle T(f) , g \rangle_D = A^{-1} \int_S \langle f(v) \delta_v , g \rangle_D \; \sigma(v) = A^{-1} \prod_{i=1}^m (2d_i)! \int_S f(v) g(v) \; \sigma(v) $$
\end{proof}

We introduce harmonic polynomials in multihomogenous setup.

\begin{dfn}[N-Harmonic Polynomials]
For a partition $N=(n_1,n_2,\ldots,n_m)$ with $n=n_1+n_2+\ldots+n_m$, we define $\mathcal{L}_i$ to be the Laplace operator in variables $\ox_i$. For instance, 
$$ \mathcal{L}_1(p) =   \frac{\partial^{2} f}{\partial x_1^2} + \frac{\partial^{2} f}{\partial x_2^2} + \ldots + \frac{\partial^{2} f}{\partial x_{n_1}^2} $$

\noindent Then, for $p \in P_{N,D}$ we say $p$ is $N$-harmonic if 

$$ \mathcal{L}_1(p)=\mathcal{L}_2(p)= \ldots = \mathcal{L}_m(p)=0 $$ \dia
\end{dfn}

The operators $\mathcal{L}_i$ introduce an order on lattice points. We define the set of lattice points that are dominated by the vector $D=(2d_1,2d_2,\ldots,2d_m)$ as $\mathcal{I}(D)$. 

$$ \mathcal{I}(D) := \{ \alpha \in \mathbb{Z}^m : (-1)^{\alpha_i} = 1  \; \text{and}  \; 0 \leq \alpha_i \leq 2d_i  \; \text{for all} \; 1 \leq i \leq m \}  $$ 

\noindent We define $H_{N,\alpha}$ to be the vector space of $N$-harmonic polynomials in $P_{N,\alpha}$. Then, we have the following orthogonal decomposition result.

\begin{lem} \label{D-ort}
Let $\oplus^D$ denote the orthogonal decomposition with respect to differential inner product. $P_{N,D}$ can be decomposed into spaces of $N$-harmonic polynomials as follows:

$$ P_{N,D} = \oplus^D_{\alpha \in \mathcal{I}(D)} r_1^{d_1-\frac{\alpha_1}{2}} r_2^{d_2-\frac{\alpha_2}{2}} \ldots r_m^{d_m-\frac{\alpha_m}{2}}  H_{N,\alpha}$$
\end{lem}

\begin{proof}
Let $\alpha, \beta \in \mathcal{I}(D)$ with $\alpha \neq \beta$, and w.l.o.g. assume $\alpha_1 > \beta_1$. Now suppose $f \in r^{D-\alpha} H_{N,\alpha}$ and $g \in r^{D-\beta} H_{N,\alpha}$ where we used $r^{D-\alpha}$ for $r_1^{d_1-\frac{\alpha_1}{2}} r_2^{d_2-\frac{\alpha_2}{2}} \ldots r_m^{d_m-\frac{\alpha_m}{2}}$. Let 
$f=r^{D-\alpha} h_{\alpha}$ and $g=r^{D-\beta} h_{\beta}$. 

$$ \langle r^{D-\alpha} h_{\alpha} , r^{D-\beta} h_{\beta} \rangle_D = \langle D(r_1^{d_1-\frac{\beta_1}{2}})(r^{D-\alpha} h_{\alpha}) , r_2^{d_2-\frac{\beta_2}{2}} \ldots r_m^{d_m-\frac{\beta_m}{2}} h_{\beta} \rangle_D $$

\noindent Since we assumed $d_1 - \frac{\beta_1}{2} > d_1 - \frac{\alpha_1}{2} $, and we also assumed $\mathcal{L}_1(h_{\alpha})=0$, this yields $\langle f , g \rangle_D = 0$. That is $H_{N,\alpha}$ is orthogonal to $H_{N,\beta}$ for $\alpha \neq \beta$. 

Now let $E = \oplus^D_{\alpha \in \mathcal{I}(D)}  r^{D-\alpha} H_{N,\alpha}$, and assume that $E \neq P_{N,D}$. Then there exists $f \in P_{N,D}$ such that $f \bot E$ w.r.t to differential inner product. By assumption $f$ is not $N$-harmonic. W.l.o.g. say $\mathcal{L}_1(f) \neq 0$, and let $f_1=\mathcal{L}_1(f)$. If $f_1$ is $N$-harmonic, then we have

$$ \langle f , r_1^2 f_1 \rangle_D = \langle D[r_1^2](f) , f_1 \rangle_D = \langle \mathcal{L}_1(f) , f_1 \rangle_D = \langle f_1 , f_1 \rangle_D \neq 0 $$

\noindent This gives a contradiction since $r_1^2f_1 \in E$ and $f \bot E$. Assume $f_1$ is not $N$-harmonic, w.l.o.g. say $\mathcal{L}_m(f_1) \neq 0$ and say $\mathcal{L}_m(f_1) = f_2$. If $f_2$ is $N$-harmonic, we can play the same game with $r_1^2r_m^2 f_2$ and arrive to a contradiction. So, assume $f_2$ is not $N$-harmonic. This inductive reasoning will arrive to a contradiction eventually since all forms of degree $0$ are $N$-harmonic! 
\end{proof}

\begin{lem} \label{T}
Suppose $f \in P_{N,D}$ is given, and let $f_{\alpha}$ denote the projection of $f$ on $r^{D-\alpha}H_{N,\alpha}$. Then, we have

$$ T(f) = \sum_{\alpha \in \mathcal{I}(D)} \left( \prod_{i}^m  \pi^{-\alpha_i} \frac{d_i! \Gamma(d_i+\frac{n_i}{2})}{(d_i-\frac{\alpha_i}{2})! \Gamma(d_i+\frac{\alpha_i+n_i}{2})} \right) f_{\alpha} $$
\end{lem}

\begin{proof}
Just repeat Lemma \ref{pr} in every set variables $n_i$ of the partition $N$. 
\end{proof}

A consequence of Lemma \ref{T} is that for a given $h \in r^{D-\alpha} H_{N,\alpha}$ for some $\alpha \in \mathcal{I}(D)$, we have $T(h) \in r^{D-\alpha} H_{N,\alpha}$. For $\beta \in \mathcal{I}(D)$ with $\alpha \neq \beta$ and $g \in r^{D-\beta} H_{N,\beta}$ we then know from Lemma \ref{D-ort} that $\langle T(h) , g \rangle_D=0$. Combining Lemma \ref{switch} with Lemma \ref{D-ort}, we have 

$$  \langle T(h) , g \rangle_D = 0 = A^{-1} (2d_1)! (2d_2)! \ldots (2d_m)! \langle h ,g \rangle  $$
 
\noindent which shows that the vector spaces $r^{D-\alpha} H_{N,\alpha}$ and $r^{D-\beta} H_{N,\beta}$ are orthogonal to each other with respect to $L_2$ inner product as well. Hence, the decomposition in Lemma \ref{D-ort} and the projection in Lemma \ref{T} are also valid in $L_2$ inner product.   

Lemma \ref{T}, and the discussion in preceding paragraph shows that the vector spaces $r^{D-\alpha} H_{N,\alpha}$  are the eigenspaces of the operator $T$ with the eigenvalues being 

$$ \prod_{i}^m  \pi^{-\alpha_i} \frac{d_i! \Gamma(d_i+\frac{n_i}{2})}{(d_i-\frac{\alpha_i}{2})! \Gamma(d_i+\frac{\alpha_i+n_i}{2})} $$

\noindent This description of the eigenvalues allows us to bound the determinant of the map $T$ as follows. 

\begin{lem} \label{detT}

$$  \pi^{-2d} \prod_{i=1}^m \left( 2d_i+\frac{n_i}{2} \right)^{-d_i}  \leq \abs{det(T)}^{\frac{1}{\dim(P_{N,D})}} \leq \pi^{-2d} \prod_{i=1}^m \left( 1+\frac{n_i}{2d_i} \right)^{-d_i} $$

\end{lem}

\begin{proof}
Since the eigenspaces of $T$ are $r^{D-\alpha}H_{N,\alpha}$, we can explicitly write the determinant.

$$ \det(T)^{\frac{1}{\dim(P_{N,D})}} =  \prod_{\alpha \in \mathcal{I}(D) }  \left( \prod_{i=1}^m  \pi^{-\alpha_i} \frac{d_i! \Gamma(d_i+\frac{n_i}{2})}{(d_i-\frac{\alpha_i}{2})! \Gamma(d_i+\frac{\alpha_i+n_i}{2})} \right)^{\frac{\dim(H_{N,\alpha})}{\dim(P_{N,D})}} $$

For any  $1 \leq j \leq d_i$, we have 

$$ \left( \frac{1}{2d_i+\frac{n_i}{2}} \right)^{d_i} \leq \left( \frac{d_i-j}{d_i+\frac{n_i}{2}} \right)^{j} \leq \frac{d_i! \Gamma(d_i+\frac{n_i}{2})}{(d_i-j)! \Gamma(d_i+j+\frac{n_i}{2})}  \leq  \left( \frac{d_i}{d_i+j+\frac{n_i}{2}} \right)^{j} \leq \left( \frac{d_i}{d_i + \frac{n_i}{2}} \right)^{d_i}$$

Using this very rough estimates we complete the proof. 
\end{proof}

In previous section we introduced zonal harmonics and used them in several proofs. Analogs of zonal harmonics and ultraspherical polynomial do exist in multihomogenous setup. However, the following basic lemma seems to suffice for our purposes. So, we don't introduce zonal harmonics explicitly even though they are lurking in the background of our proofs. 

\begin{lem} \label{zonal}
Let $\{ y_i \}_{i=1}^{\dim(P_{N,D})}$ be an orthonormal basis of $P_{N,D}$ with respect to $L_2$ inner product. For all $v \in S$, we define a corresponding polynomial $q_v \in P_{N,D}$ as follows:

$$ q_v(x) := \sum_{i=1}^{\dim(P_{N,D})} y_i(v) y_i(x) $$

\noindent Then, $q_v$ satisfy the following properties.

\begin{enumerate}
\item For all $f \in P_{N,D}$, we have
   $$ \langle f ,q_v \rangle = f(v)  $$

\item For all $T \in SO(N)$ and $v \in S$, we have

$$ T \circ q_v = q_{Tv} $$

\item For all $v \in S$, we have

$$ q_v(v)=\norm{q_v}_2^2=\max_{x \in S} q_v(x)=\dim(P_{N,D}) $$

\item For any $f \in P_{N,D}$, we have  
            $$ \frac{\max_{x \in S} \abs{f(x)}}{\norm{f}_2} \leq \sqrt{\dim(P_{N,D})} $$
\noindent and the equality is satisfied if and only if $f=cq_v$ for some $v \in S$, and a constant $c$.						
\end{enumerate}
\end{lem}
\begin{proof}
For any $f \in P_{N,D}$ and for all $v \in S$, we have 

$$ f(v) = \sum_{i} \langle f , y_i \rangle y_i(v)  $$

\noindent This proves the first claim. For $T \in O(N)$ and $f \in P_{N,D}$, 

$$  \langle f , T \circ q_{v} \rangle  =  \int_{S} f(x) q_{v}(T^{-1} x) \; \sigma(x) = \langle T^{-1} \circ f , q_v \rangle = f(Tv)  $$

\noindent $f(Tv)=\langle f , q_{Tv} \rangle$ by the first property, and  $f \in P_{N,D}$ is arbitrary. This shows $T \circ q_{v} = q_{Tv}$. As a side result this also shows $q_v(v) = q_{Tv}(Tv)$ for all $v \in S$ and for all $T \in SO(N)$. Since $S$ is the $SO(N)$ orbit of any vector $v \in S$, we have the following consequence

$$  q_v(v) = \int_{S} q_{w}(w) \; \sigma(w) =  \int_{S} \sum_i y_i(w)^2 \; \sigma(w) = \sum_{i} \int_{S} y_i(w)^2 \; \sigma(w) = \dim(P_{N,D})$$

\noindent Again using the first property, we have $ \langle q_v , q_v \rangle = q_v(v)$ and $\abs{q_v(w)}=\abs{\langle q_v ,q_w \rangle} \leq \norm{q_v}_2 \norm{q_w}_2$. This completes the proof the third claim. Now for any $f \in P_{N,D}$ and for any $x \in S$, we have  $\abs{f(x)} \leq \norm{f}_2 \norm{q_x}_2 $. Since $\norm{q_x}_2 = \sqrt{\dim(P_{N,D})}$, we directly have 
  $$   \frac{\max_{x \in S} \abs{f(x)}}{\norm{f}_2} \leq \sqrt{\dim(P_{N,D})} $$
\noindent The equality case is given by the equality criterion of Cauchy-Schwarz inequality.
\end{proof}

\subsection{Barvinok's Inequality}

In this section we will present Barvinok's inequality for multihomogenous polynomials \cite{Bar02}. Barvinok's inequality is proved in the general setting of compact group orbits, where we only need the special case of the group $SO(N)$  acting on the vector space $V=(\mathbb{R}^{n_1})^{\otimes 2d_1} \times (\mathbb{R}^{n_2})^{\otimes 2d_2} \times \ldots \times (\mathbb{R}^{n_m})^{\otimes 2d_m}$.

To present Barvinok's inequality,  we need to introduce a bit of terminology. For $f \in P_{N,D}$, we define the $L_{\infty}$ norm of $f$ as follows:

$$ \norm{f}_{\infty} := \max_{v \in S}   \abs{f(v)} $$

We also need $L_{2k}$ norms for even integers $2k$.

$$ \norm{f}_{2k} := \left( \int_{S} f(v)^{2k} \; \sigma(v) \right)^{\frac{1}{2k}} $$

\begin{thm}[Barvinok's Inequality for Multihomogenous Forms] \label{Barvinok}
Let $f \in P_{N,D}$, let $k \in \mathbb{N}$, and set $d_{2k}= \prod_{i=1}^m \binom{2kd_i + n_i -1 }{2kd_i}$. Then, we have

$$  \norm{f}_{2k} \leq \norm{f}_{\infty} \leq d_{2k}^{\frac{1}{2k}} \norm{f}_{2k}  $$
\end{thm}

\section{The Cone of Nonnegative Polynomials}

We begin with recalling the definition of function $\mu$ from the introduction. We defined 

$$ C_{N,D}:=\left\{p \in P_{N,D} \; | \; \int_S p \: d\sigma=1\right\} $$ 

\noindent Then for any $X\!\subseteq\! P_{N,K}$  we set 

$$\mu(X)= \left( \frac{vol(X \cap C_{N,D})}{vol(B)} \right)^{\frac{1}{\dim(P_{N,D})}} $$

\noindent where $B$ is the unit ball ball with respect to $L_2$ inner product.

In this section we construct an isotropic measure introduced by the pointwise evaluation polynomials in Lemma \ref{zonal},  and show that convex hull of this isotropic measure is dual to the cone of nonnegative polynomials. Our upper bound for $\mu(\pos_{N,D})$ then follows from Theorem \ref{LYZ} via duality.

\begin{lem} \label{nice}
$$\mu(\pos_{N,D}) \leq 5$$
\end{lem}

\begin{proof}
We define a map $\Phi : S \rightarrow P_{N,D}$ by  setting 

$$ \Phi(v)  :=  \frac{q_v-r}{\sqrt{\dim(P_{N,D})-1}} $$

\noindent where $q_v$ is the polynomial corresponding to the vector $v$ as in Lemma \ref{zonal}, and $r=r_1^{d_1}r_2^{d_2} \ldots r_{m}^{d_m}$ with $r_1=x_1^2+x_2^2+\ldots+x_{n_1}^2$ and $r_i$ are all defined similarly. 

It is not hard to prove that $\Phi$ is Lipschitz and injective.  Now let $U \subseteq P_{N,D}$ be defined as follows. 

$$ U:=\{ p \in P_{N,D} : \langle p , r \rangle= 0 \} $$ 

\noindent For all $v \in S$, we have  $r(v)=\langle r , q_v \rangle=1$. This implies $Im(\Phi) \subseteq U$, and it also shows that $\norm{\Phi(v)}_2=1$ since $\norm{q_v}_2=\sqrt{\dim(P_{N,D})}$ for all $v \in S$. Let $\sigma_i$ be the uniform measure on $S^{n_i-1}$, and let $\sigma$ be the product of $\sigma_i$ as defined before.  Now, we define a measure $Z$ on the unit sphere of $U$, as the push-forward measure of  $\sigma$ under the map $\Phi$. It follows directly that $\supp(Z)=\mathrm{Image}(\Phi)$, and since $Z$ is a push-forward measure, it satisfies the following property for every function $g$ on U.

$$ \Int_{U} g(u) \: Z(u) = \Int_{S} g(\Phi(v)) \; \sigma(v) $$ 

\noindent Now observe that for every $f \in U$, we have the following equality:

$$ \norm{f}_2^2= \Int_{S} f(v)^2 \; \sigma(v) = \Int_{S} M \langle f, \Phi(v) \rangle^2 \; \sigma(v)= \Int_{U} M \langle f,u \rangle^2 \; Z(u) $$

\noindent where $M=\dim(U)=\dim(P_{N,D})-1$. This equality shows that $Z$ is an isotropic measure supported on the unit sphere of the vector space $U$!

To compute the centroid of $Z$ we consider the following polynomial $q$:

$$ q: =\Int_{S} q_v \; \sigma(v) $$

\noindent We observe that $q$ is invariant under the action of  $SO(N)$. This immediately yields that $q=ar$ for some constant $a$.  Since $\norm{r}_2=1$, and $\langle q_v ,r \rangle=1$, we have $q_v=r$. Thus $\frac{q-r}{\sqrt{M}}$ - which is the centroid of $Z$ - is the origin. Now using Theorem \ref{LYZ}, we deduce

$$\vol{{\conv(Im(\Phi))}^{\circ}} \leq \frac{M^{\frac{M}{2}} (M+1)^{\frac{M+1}{2}}}{M!}$$

\noindent where $M=\dim(U)=\dim(P_{N,D})-1$. Now we define the following convex body which will be useful in the rest of the article: 

$$ V:=\conv(\{ q_v-r : v \in S  \}) $$ 

\noindent where $q_v$ is the pointwise evaluation polynomial corresponding to the vector $v$ as defined in Lemma \ref{zonal}. Note that $V=\sqrt{M} \conv(\mathrm{Image}(\Phi))$. Using the inequality above, we have 

\begin{gather} \label{hmmm}
\abs{V^{\circ}} \leq \frac{ M^{\frac{M}{2}} (M+1)^{\frac{M+1}{2}} }{M! (\sqrt{M})^M} 
\end{gather}

\noindent Suppose $f \in C_{N,D}$ is given (i.e., $f \in P_{N,D}$ and $\langle f ,r \rangle=1$). Then, 

$$ f(v) \geq 0 \; \text{for all} \: v \in S \Leftrightarrow (f-r)(v) \geq -1  \Leftrightarrow \langle r-f , q_v-r \rangle \leq 1 $$

\noindent That is, $f \in C_{N,D} \cap \pos_{N,D}$ if and only if $-f+r \in V^{\circ}$. 

$$ C_{N,D} \cap \pos_{N,D} =  - V^{\circ} + r $$

\noindent Hence by (\ref{hmmm}), we have

$$ \mu(Pos_{N,D}) \leq (\frac{M^{\frac{M}{2}} (M+1)^{\frac{M+1}{2}}}{M! M^{\frac{M}{2}} \abs{B}})^{\frac{1}{M}} \leq \frac{\abs{B}^{\frac{-1}{M}} M^{\frac{1}{2}}}{\frac{M}{e}}$$

$$ \mu(Pos_{N,D}) \leq \frac{e}{\sqrt{M} \abs{B}^{\frac{1}{M}}} \leq 5 $$  

\end{proof}

\begin{rem}
Barvinok and Blekherman \cite{BarBle03} used John's Theorem to approximate volume of convex hulls of compact  group orbits. John's Theorem provides  very good approximation  for ellipsoid-like bodies but may not be sharp for convex bodies that do not resemble ellipsoids (i.e say bodies with large Banach-Mazur distance to the Euclidean unit ball). For instance, as far as we are able to compute Barvinok and Blekherman's Theorem yields an upper bound of the order $\sqrt{\dim(P_{N,D})}$ for $\mu(P_{N,D})$. 
\end{rem}

The following lemma states our lower bound for $\mu(\pos_{N,D})$. The construction carried out in the proof of Theorem \ref{nice} seems to indicate a lower bound via discretization and Vaaler's Inequality \cite{Vaaler}. For now we give a lower bound by using standard techniques combined with Barvinok's inequality (Theorem \ref{Barvinok}).

\begin{lem} \label{loose}
$$ \mu(\pos_{N,D}) \geq \frac{1}{ 4 \sqrt{\max_{i} \{ n_i \}} \prod_{i=1}^m \sqrt{2d_i+1} } $$
\end{lem}

\begin{proof}
Let's agree to call $\pos_{N,D} \cap C_{N,D}$ as $K$. Then $\mu(\pos_{N,D})=\left( \frac{\abs{K}}{\abs{B}} \right)^{\frac{1}{M}}$ where $M=\dim(P_{N,D})-1$. We will estimate the volume of $K-r$ from below. We defined a useful subspace in the previous proof: $ U := \{ f \in P_{N,D} :  \langle f , r \rangle = 0 \} $. 

Clearly $K -r \subseteq U$. Moreover, for $f \in U$, $f \in K-r$ if and only if $\min_{x \in S} f(x) \geq -1$. Using the Gauge function language introduced in the background section, we observe that $G_{K-r}(f)=\abs{\min_{x \in S} f(x) }$. Now we can express the volume of $K-r$ with $G_{K-r}$ using Lemma \ref{standard}.

$$ \left( \frac{\abs{K-r}}{\abs{B}} \right)^{\frac{1}{M}} = ( \int_{S^{M-1}} G_{K-r}(f)^{-M} \; \sigma_{M} (f) )^{\frac{1}{M}}  $$

\noindent where $S^{M-1}$ is the unit sphere of $U$, and $\sigma_M$ is the uniform measure on $S^{M-1}$. Clearly $G_{K-r}(f)=\abs{\min_{x \in S} f(x) } \leq \norm{f}_{\infty}$ for all $f$. So, we have

$$ \left( \frac{\abs{K-r}}{\abs{B}} \right)^{\frac{1}{M}}  \geq \left( \int_{S^{M-1}} \norm{f}_{\infty}^{-M} \; \sigma_M(f) \right)^{\frac{1}{M}} $$

Using H\"older and Jensen's inequalities, we have

$$ \left( \int_{S^{M-1}} \norm{f}_{\infty}^{-M} \; \sigma_M(f) \right)^{\frac{1}{M}} \geq \int_{S^{M-1}} \norm{f}_{\infty}^{-1} \; \sigma_M(f)  \geq \left( \int_{S^{M-1}} \norm{f}_{\infty} \; \sigma_M(f)  \right)^{-1} $$

\noindent So, it suffices to prove an upper bound for $\int_{S^{M-1}} \norm{f}_{\infty} \; \sigma_M(f)  $. We will use Barvinok's inequality for this purpose. Let $2k \geq 2$ be an even integer to be optimized later, and let $d_k=\prod_{i=1}^m \binom{2kd_i + n_i -1 }{2kd_i}$. Barvinok's inequality gives the following:

$$  \int_{S^{M-1}} \norm{f}_{\infty} \; \sigma_M(f) \leq d_k^{\frac{1}{2k}} \int_{S^{M-1}} \left( \int_{S} f(v)^{2k} \; \sigma(v) \right)^{\frac{1}{2k}} \sigma_{M}(f)   $$

\noindent Using H\"older's inequality and Fubini's theorem we have 

$$ \int_{S^{M-1}} \norm{f}_{\infty} \; \sigma_{M}(f) \leq d_k^{\frac{1}{2k}} \left(\int_{S}  \int_{S^{M-1}} f(v)^{2k} \; \sigma_{M}(f) \; \sigma(v) \right)^{\frac{1}{2k}} $$

\noindent The average inside the integral is independent of vector $v$, so we just need to compute the integral for any fixed $v$.  

$$  \Int_{S^{M-1}} f(v)^{2k} \; \sigma_{M}(f) = \int_{S^{M-1}} \langle f ,q_v - r \rangle^{2k} \; \sigma_{M}(f)  $$

\noindent Note that we know $\norm{q_v-r}_2=\sqrt{M}$. So, we obtain 

$$ \left( d_k \int_{S^{M-1}} \langle f ,q_v \rangle^{2k} \; \sigma_{M}(f) \right)^{\frac{1}{2k}} \leq {d_k}^{\frac{1}{2k}} \sqrt{M} \left( \frac{\Gamma(k+\frac{1}{2}) \Gamma(\frac{1}{2}M)}{ \pi^{\frac{M}{2}}\Gamma(\frac{1}{2}M+k)} \right)^{\frac{1}{2k}} $$ 

\noindent If $k \leq M$, we have

$$ \left(\frac{\Gamma(k+\frac{1}{2}) }{\pi^{\frac{M}{2}}}\right)^{\frac{1}{2k}} \leq \sqrt{k}  \; \;   \text{and} \; \; \left(  \frac{\Gamma(\frac{1}{2}M)}{\Gamma(\frac{1}{2}M+k)} \right)^{\frac{1}{2k}} \leq  \sqrt{\frac{2}{M}} $$

$$ \Int_{S^{M-1}} \lVert f \rVert_{\infty} \: df \leq d_k^{\frac{1}{2k}} \sqrt{M} \sqrt{k} \sqrt{\frac{2}{M}} \leq d_k^{\frac{1}{2k}} \sqrt{2k} $$

\noindent Now we need to pick $k \leq M$ in an optimal way to minimize  $d_k^{\frac{1}{2k}}$. We set  $h=\max\{ n_i \}$,  and set $k= h \prod_i (2d_i+1)$. Note that we always have $k \geq 3^m h$, $m \geq 2$.

$$ d_k^{\frac{1}{2k}} = \prod_{i=1}^m \binom{2kd_i + n_i -1 }{2kd_i}^{\frac{1}{2k}} \leq \prod_i \left( \frac{ek(2d_i+1)}{n_i} \right)^{\frac{n_i}{2k}} \leq (\frac{ek^2}{h})^{\frac{mh}{2k}} \leq (9\sqrt{e})^{\frac{2}{9}}  < 2\sqrt{2}$$

\noindent Here we used $ (\frac{k}{n_i})^{\frac{n_i}{k}} \leq (\frac{k}{h})^{\frac{h}{k}}$. Hence, we have proved the following. 

$$ \Int_{S^{M-1}} \lVert f \rVert_{\infty} \; \sigma_{M}(f) \leq  4 \sqrt{ \max \{ n_i \} } \prod_{i} \sqrt{2d_i + 1} $$
\end{proof}

\section{The Cone of Sums of Squares}
In this section we prove our bounds for $\mu(\sq_{N,D})$. We  start with the upper bound.

\begin{lem} \label{squp}

$$  \mu(\sq_{N,D})  \leq 4 \prod_i 2^{5d_i} e^{\frac{d_i}{2}} (\frac{d_i}{n_i+2d_i})^{\frac{d_i}{2}} $$

\end{lem}

\begin{proof}
We call $\sq_{N,D} \cap C_{N,D}$ as $K$, and we work on $K-r$. Let's recall the definition of mean width.

$$ \omega(K-r) := \int_{S^{M-1}} h_{K-r}(f) \; \sigma_M(f) = \int_{S^{M-1}}  \max_{g \in K-r} \langle f ,g \rangle \; \sigma_{M}(f)$$ 

\noindent where $S^{M-1}$ is the unit sphere of the subspace $U := \{ f \in P_{N,D} : \langle f ,r \rangle= 0 \}$, and $\sigma_M$ is the uniform measure on it. By Theorem \ref{urysohn} (Urysohn's inequality),  we have   

$$\mu(\sq_{N,D})  \leq \omega(K-r) $$ 

\noindent For a $g \in P_{N,\frac{D}{2}}$ with $g^2 \in \sq_{N,D} \cap C_{N,D}$, we have by definition $\int_S g^2(v) \; \sigma(v) = 1$. Thus $\norm{g}_2=1$. So, all extreme points of $K-r$ are of the form $g^2 - r$ for a $g \in P_{N,\frac{D}{2}}$ with $\norm{g}_2=1$. For $f \in U$, we also have $ \langle f , g^2-r \rangle= \langle f , g^2 \rangle$. So, we write 

$$ h_{K-r}(f) \leq  \max_{g \in P_{N,D/2} , \lVert g \rVert=1} \langle f,g^2 \rangle $$ 

\noindent This gives us an easy inequality for the mean width.

$$ \omega(K-r) \leq \int_{S^{M-1}}  \max_{g \in P_{N,D/2} , \lVert g \rVert=1} \abs{ \langle f,g^2 \rangle}   \; \sigma_{M}(f) $$

\noindent For a fixed $f$, $\langle f,g^2 \rangle$ is a quadratic form on variable $g$. Let's call this quadratic form $Q(f)$, then we have  $\norm{Q(f)}_{\infty} = \max_{g \in P_{N,D/2} , \lVert g \rVert=1} \abs{ \langle f,g^2 \rangle}$. We apply Barvinok's inequality to $Q(f)$ with exponent $k$ to be optimized later.

$$  \omega(K-r) \leq  d_{k}^{\frac{1}{2k}} \int_{S^{M-1}} \left( \int_{S^{D-1}} \langle f,g^2 \rangle^{2k} \; \sigma_D(g) \right)^{\frac{1}{2k}} \sigma_{M}(f) $$

\noindent where $S^{D-1}$ is the unit sphere of $P_{N,\frac{D}{2}}$. Using some help from H\"older and Fubini,  we have

$$  \omega(K-r) \leq  d_{k}^{\frac{1}{2k}} \left( \int_{S^{D-1}}  \int_{S^{M-1}} \langle f,g^2 \rangle^{2k} \; \sigma_{M}(f) \; \sigma_D(g) \right)^{\frac{1}{2k}} $$

\noindent  Thanks to Reverse H\"older inequalities of J.\ Duoandikoetxea \cite{Duo87}, we have $ \norm{g^2}_2 \leq 4^{deg(g^2)} $. Note that $deg(g^2)=\sum_i 2d_i=2d$. Now we are in the same situation as in the last part of the proof of Lemma \ref{loose}. So, we have

$$ \left( d_k \int_{S^{M-1}} \langle f , g^2 \rangle^{2k} \; \sigma_{M}(f) \right)^{\frac{1}{2k}} \leq {d_k}^{\frac{1}{2k}}   4^{2d} \left( \frac{\Gamma(k+\frac{1}{2}) \Gamma(\frac{1}{2}M)}{ \pi^{\frac{M}{2}}\Gamma(\frac{1}{2}M+k)} \right)^{\frac{1}{2k}} $$ 

\noindent For $k \leq M$, we have

$$  \left( d_k \int_{S^{M-1}} \langle f , g^2 \rangle^{2k} \; \sigma_{M}(f) \right)^{\frac{1}{2k}} \leq {d_k}^{\frac{1}{2k}}  4^{2d} \sqrt{\frac{2k}{M}}  $$

\noindent $Q(f)$ is a quadratic form in $\dim(P_{N,\frac{D}{2}})$ many variables, so $d_k=\binom{2k + \dim(P_{N,\frac{D}{2}})-1}{2k}$. We set $2k=\dim(P_{N,\frac{D}{2}})$. This gives $d_k^{\frac{1}{2k}} \leq 4$. So, we have proved the following.

$$ \mu(\sq_{N,D})  \leq \omega(K-r) \leq 4^{2d+1} \left( \frac{\dim(P_{N,\frac{D}{2}})}{\dim(P_{N,D})-1} \right)^{\frac{1}{2}} $$ 

\noindent where $\frac{D}{2}=(d_1,d_2,\ldots,d_m)$ and $d=d_1+d_2+\ldots+d_m$. Stirling's estimate gives us the following:

$$ \frac{\dim(P_{N,\frac{D}{2}})}{\dim(P_{N,D})} \leq \frac{\prod_i (\frac{e(n_i+d_i)}{d_i} )^{d_i} }{ \prod_i ( \frac{n_i+2d_i}{2d_i} )^{2d_i}  } \leq \prod_i 2^{2d_i}e^{d_i} (\frac{d_i}{n_i+2d_i})^{d_i}  $$

$$ \mu(\sq_{N,D})  \leq \omega(K-r) \leq 4 \prod_i 2^{5d_i} e^{\frac{d_i}{2}} (\frac{d_i}{n_i+2d_i})^{\frac{d_i}{2}} $$
\end{proof}

To prove the lower bound for $\mu(\sq_{N,D})$ we need the following lemma which was essentially proved by Blekherman as Lemma 5.3 at \cite{Ble06} 

\begin{lem}[Blekherman's observation]
$$ \sq_{N,D}^{d*} \subseteq \sq_{N,D} $$
where $ \sq_{N,D}^{d*}$ is the dual cone with respect to the differential metric. 
\end{lem}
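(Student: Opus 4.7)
My plan is to reformulate $\sq_{N,K}^{d*}$ as a positive-semidefinite-catalecticant condition and then identify $p$ explicitly via the apolar pairing. First, since $\sq_{N,K}$ is the conic hull of $\{g^2 : g \in V\}$ with $V := P_{N,K/2}$, the condition $\langle p, q \rangle_D \geq 0$ for every $q \in \sq_{N,K}$ unpacks by bilinearity to
\[
Q_p(g) := \langle p, g^2 \rangle_D \geq 0 \quad \text{for all } g \in V,
\]
i.e., the catalecticant quadratic form $Q_p$ on $V$ is PSD. I would then record that the linear map $\Phi\colon P_{N,K} \to \mathrm{Sym}^2(V^*)$, $p \mapsto Q_p$, is injective: its transpose is the multiplication $\mathrm{Sym}^2(V) \to P_{N,K}$, which is surjective because every multi-index of multidegree $K$ decomposes as a sum of two multi-indices of multidegree $K/2$. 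Hence $p$ is pinned down by $Q_p$.

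Next, I would set up natural rank-one test vectors: for each $v = (v_1, \ldots, v_m) \in S$ define the separable polynomial $\phi^v(x) := \prod_{j=1}^m (v_j \cdot x_j)^{k_j} \in V$, so that $(\phi^v)^2 = K_v$ in the notation of Lemma~\ref{Weyl}. Iterating Lemma~\ref{Weyl}(1) block-by-block gives $\langle \phi^v, g \rangle_D = (\prod_j k_j!)\,g(v)$, and accordingly $g(v)^2 = \langle K_v, g^2 \rangle_D$. In particular $Q_p(\phi^v) = p(v)$, so PSD-ness of $Q_p$ immediately yields $p(v) \geq 0$ on $S$, recovering the weaker inclusion $\sq_{N,K}^{d*} \subseteq \pos_{N,K}$.

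The heart of the argument is to promote PSD-ness of $Q_p$ to a finite atomic moment representation
\[
Q_p(g) = \sum_i \mu_i\, g(v_i)^2, \qquad \mu_i \geq 0,\ v_i \in S.
\]
If this can be arranged, then substituting $g(v_i)^2 = \langle K_{v_i}, g^2 \rangle_D$ yields $Q_p(g) = \langle \sum_i \mu_i K_{v_i},\, g^2 \rangle_D = Q_{p'}(g)$ with $p' := \sum_i \mu_i K_{v_i}$; injectivity of $\Phi$ forces $p = p'$. Since each $K_{v_i} = (\phi^{v_i})^2$ is a single square, $p$ is then a positive combination of squares, so $p \in L_{Q_{N,K}} \subseteq \sq_{N,K}$.

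The only genuine difficulty is the atomic moment step. It is not automatic from PSD-ness alone (the PSD catalecticant cone strictly contains the moment cone in general), and the extraction relies on the extra Hankel symmetry $Q_p(gh_1, h_2) = Q_p(g, h_1 h_2)$ inherited from $p \in P_{N,K}$, together with a truncated $K$-moment / flat-extension argument on the compact product sphere $S = S^{n_1-1} \times \cdots \times S^{n_m-1}$ (in the spirit of Tchakaloff and Curto--Fialkow). This is exactly the structural input supplied by Blekherman's original Lemma~5.3 in \cite{Ble06}, and the cleanest route for our multihomogeneous setting is to verify that the argument there transports block-by-block using the product structure of $S$ and the factorization $K_v = \prod_j (\phi^{v_j})^2$.
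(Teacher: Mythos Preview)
The paper does not supply its own proof of this lemma; it simply attributes it to Blekherman's Lemma~5.3 in \cite{Ble06} and uses it as a black box. So there is no ``paper's proof'' to compare against, but your proposal can still be assessed on its merits, and it contains a genuine gap.

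Your atomic moment step is not merely ``the only genuine difficulty'' --- it is \emph{false} in general. If every $p\in\sq_{N,K}^{d*}$ admitted a representation $Q_p(g)=\sum_i\mu_i\,g(v_i)^2$ with $\mu_i\ge 0$ and $v_i\in S$, then by your own injectivity argument $p=\sum_i\mu_i K_{v_i}\in L_{Q_{N,K}}$, so you would have established $\sq_{N,K}^{d*}\subseteq L_{Q_{N,K}}$. But the reproducing property in Lemma~\ref{Weyl}(1) gives $L_{Q_{N,K}}^{d*}=\pos_{Q_{N,K}}$, hence $L_{Q_{N,K}}=\pos_{Q_{N,K}}^{d*}$ by biduality; your inclusion would therefore dualize to $\pos_{Q_{N,K}}\subseteq\sq_{Q_{N,K}}$, contradicting Theorem~\ref{CLRmulti} (and Hilbert) for every $(N,K)$ outside the short list of equality cases. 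No Tchakaloff or Curto--Fialkow argument can rescue this route: the PSD catalecticant cone is strictly larger than the cone of moment matrices coming from atomic measures on $S$, and that gap is exactly $\sq_{N,K}^{d*}\setminus L_{Q_{N,K}}$.

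The argument that does work stays on the Gram-matrix side and never asks for point evaluations. In the monomial basis of $V=P_{N,K/2}$, the catalecticant is $C_{\alpha\beta}=\langle p,x^{\alpha+\beta}\rangle_D=(\alpha+\beta)!\,p_{\alpha+\beta}$, PSD by hypothesis. Put $G_{\alpha\beta}:=\dfrac{1}{\prod_j\binom{2k_j}{k_j}}\cdot\dfrac{C_{\alpha\beta}}{\alpha!\,\beta!}$; this is a diagonal congruence of $C$, hence PSD. A block-wise Vandermonde identity,
\[
\sum_{\substack{\alpha+\beta=\gamma\\ \alpha\in V}}\binom{\gamma}{\alpha}=\prod_{j=1}^m\binom{2k_j}{k_j}\qquad\text{for every }\gamma\text{ of multidegree }K,
\]
then yields $\sum_{\alpha+\beta=\gamma}G_{\alpha\beta}=p_\gamma$, i.e.\ $G$ is a Gram matrix for $p$, so $p\in\sq_{N,K}$. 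Your steps (1)--(3) are fine and recover $\sq_{N,K}^{d*}\subseteq\pos_{Q_{N,K}}$; step (4) must be replaced by this direct construction rather than an atomic decomposition.
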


\begin{lem}
 $$ \mu(\sq_{N,D}) \geq  c_s  \prod_{i=1}^m (2^{10} \pi^4)^{-\frac{d_i}{2}} \left( \frac{n_i}{2} + 2d_i\right)^{-\frac{d_i}{2}}  $$
\noindent where $0<c<1$ is a universal constant.
\end{lem}

\begin{proof}
We set $K=\sq_{N,D} \cap C_{N,D}$, and  $K_1=\sq_{N,D}^{d*} \cap C_{N,D}$. Due to Blekherman's observation, we have $K_1 \subseteq K$. We also have that for $f \in K_1 - r$ and $g \in K -r$, 

$$ \langle f - r , g -r \rangle_D = \langle f , g \rangle_D - \langle r , r \rangle_D  \geq - A^{-1} \prod_i (2d_i)!  .$$

\noindent Also note that this inequality is reversible and it is a description of the convex body $K_1-r$.

Now we set $K_2=\sq_{N,D}^{\circ} \cap C_{N,D}$, where $\sq_{N,D}^{\circ}$ denotes the dual cone with respect to $L_2$ inner product. As in our preceding proofs, we have $K_2-r=-(K-r)^{\circ}$, where $(K-r)^{\circ}$ is the polar body with respect to $L_2$-inner product. Now, let $f \in T(K_2)$ with $f=T(h)$, and let $g \in K$. Then,

$$ \langle f -r , g -r \rangle_D = A^{-1} \prod_{i} (2d_i)! \langle h -r , g -r \rangle \geq - A^{-1} \prod_{i} (2d_i)!  $$

\noindent where we used $r=T(r)$. This inequality shows that $T(K_2) - r \subseteq K_1 -r$.  Combining this inclusion with the fact that $K_1 \subseteq K$ gives us the following inequality.

$$  \det(T)^{\frac{1}{M}} \left( \frac{\abs{K_2-r}}{\abs{B}} \right)^{\frac{1}{M}} \leq \left( \frac{\abs{K_1-r}}{\abs{B}} \right)^{\frac{1}{M}} \leq  \left( \frac{\abs{K}}{\abs{B}} \right)^{\frac{1}{M}} $$

\noindent where $M$ is the dimension of the vector space $U := \{ q \in P_{N,D} : \langle q ,r \rangle = 0 \}$. Note that Reverse Santalo inequality (Theorem \ref{bigboe}) gives us the following:

$$  c_s \left( \frac{\abs{B}}{\abs{K}}  \right)^{\frac{1}{M}}  \leq  \left( \frac{\abs{K_2-r}}{\abs{B}} \right)^{\frac{1}{M}} $$

\noindent So, we have   

$$  c_s \det(T)^{\frac{1}{M}} \left( \frac{\abs{B}}{\abs{K}}  \right)^{\frac{1}{M}}    \leq  \left( \frac{\abs{K}}{\abs{B}} \right)^{\frac{1}{M}} $$

\noindent Combining the upper bound in Lemma \ref{squp}, and the lower bound for $det(T)^{\frac{1}{M}}$ in Lemma \ref{detT} gives the following.

$$  \left( \frac{\abs{K}}{\abs{B}} \right)^{\frac{1}{M}} \geq c \pi^{-2d}  \prod_{i=1}^m (2d_i + \frac{n_i}{2})^{-d_i} \left( \frac{n_i+ 2d_i}{2^{10}e d_i}\right)^{\frac{d_i}{2}} \geq c_s (2^{10} \pi^4)^{-\frac{d}{2}} \prod_{i=1}^m \left( \frac{n_i}{2} + 2d_i\right)^{-\frac{d_i}{2}}  $$
 
\end{proof}

\section{The Cone of Powers of Linear Forms}

This section develops quantitative bounds on the cone of even powers of linear forms. 

$$ L_{N,D}:=\{ p \in \pos_{N,D} : p=\sum_{i} l_{i1}^{2d_1} l_{i2}^{2d_2} \cdots l_{im}^{2d_m} \; \text{where} \; l_{ij} \; \text{are linear forms in} \; \ox_j \}  $$

We will consider volume bounds for the following section of $L_{N,D}$:

$$ L_{N,D} \cap C_{N,D} = \{ f \in L_{N,D} : \langle f , r \rangle = 1 \} = \{ f \in L_{N,D} : \langle f , r \rangle_D = A^{-1} (2d_1)! (2d_2)! \ldots (2d_m)! \} $$ 

\noindent For convenience in the rest of this section we set $K:=L_{N,D} \cap C_{N,D} $ and $\lambda:=\prod_{i}(2d_i)!$. 

Extreme points of $K$ are of the form $l_{1}^{2d_1} l_{2}^{2d_2} \cdots l_{m}^{2d_m}$ for some linear forms $l_i=\langle c_i , \ox_i \rangle$. One can scale all these $c_i$, and write $l_{1}^{2d_1} l_{2}^{2d_2} \cdots l_{m}^{2d_m} = \prod_i \norm{c_i}_2^{2d_i} \langle x , \frac{c_i}{\norm{c}_2} \rangle^{2d_i}$. Note that in this case, we have

$$  A^{-1} \lambda = \langle r, \prod_i \norm{c_i}_2^{2d_i} \langle \ox_i , \frac{c_i}{\norm{c}_2} \rangle^{2d_i} \rangle_D = \lambda \prod_{i} \norm{c_i}_2^{2d_i} $$

\noindent This shows that extreme points of $K$ are of the form $A^{-1} \prod_{i} \langle \ov_i , \ox_i \rangle^{2d_i}$ for some $v \in S$. 

Recall that for $v \in S$, we have already defined $\delta_v=\prod_{i} \langle \ov_i , \ox_i \rangle^{2d_i}$ in the tale of inner products section. Our discussion so far, combined with Krein-Milman theorem gives the following result. 

$$ K = conv \{ A^{-1} \delta_v : v \in S \}$$
  
Recall that we also showed existence of polynomials $q_v$ in Lemma \ref{zonal}, with the property that $\langle f , q_v \rangle=f(v)$ for all $f \in P_{N,D}$. Now, let $T$ be the operator as defined in the tale of inner products section, and consider $T(q_v)$:

$$ \langle f , T(q_v) \rangle_D = A^{-1} \lambda \langle f , q_v \rangle = A^{-1} \lambda f(v) $$

\noindent Since $f$ is arbitrary, and $\langle f ,\delta_v \rangle_D = \lambda f(v)$ this shows that $T(q_v)=A^{-1} \delta_v$ for all $v \in S$. This gives another description of $K$.

$$ K = conv \{ T(q_v)  : v \in S \} $$

\noindent In the section on volume bounds for the cone of nonnegative polynomials, we have defined the convex body $ V := \{ q_v-r : v \in S \}$, and we proved volume bounds for the polar of $V$:

$$  \frac{1}{5}  \leq (\frac{\abs{B}}{\abs{V^{\circ}}})^{\frac{1}{\dim(P_{N,D})}} \leq   4 \sqrt{\max_{i} \{ n_i \}} \prod_{i=1}^m \sqrt{2d_i+1} $$

\noindent This translates to the following lower bound for $\abs{V}$ via reverse Santalo inequality (Theorem \ref{bigboe}).

$$  \frac{c_s}{5}   \leq (\frac{\abs{V}}{\abs{B}})^{\frac{1}{\dim(P_{N,D})}}  $$

\noindent For an upper bound on $(\frac{\abs{V}}{\abs{B}})^{\frac{1}{\dim(P_{N,D})}} $, we would like to use Santalo inequality (Theorem \ref{Santalo}). So, we need to find out the Santalo point of $V$. Note that the Santalo point of $V$ is unique, and $V$ is invariant under $SO(N)$ action. Also note that the unique point in $V$, which is invariant under $SO(N)$ action is the origin, and hence it is the Santalo point of $V$. So, this gives us the following upper bound.

$$ (\frac{\abs{V}}{\abs{B}})^{\frac{1}{\dim(P_{N,D})}} \leq (\frac{\abs{B}}{\abs{V^{\circ}}})^{\frac{1}{\dim(P_{N,D})}} \leq 4 \sqrt{\max_{i} \{ n_i \}} \prod_{i=1}^m \sqrt{2d_i+1} $$

\noindent We observed that $K=T(V)$, and we already had some upper and lower bound for $det(T)$ in Lemma \ref{detT}.  Also recall that  $ \mu(L_{N,D}) = \left( \frac{\abs{K}}{\abs{B}} \right)^{\frac{1}{\dim(P_{N,D})}}$. All together, these facts give us the following result.
\begin{thm}
$$ c_s  \pi^{-2d} \prod_{i=1}^m \left( 2d_i+\frac{n_i}{2} \right)^{-d_i}  \leq  \mu(L_{N,D}) \leq 4 \pi^{-2d} \sqrt{\max_{i} \{ n_i \}}  \prod_{i=1}^m \sqrt{2d_i+1} \left( 1+\frac{n_i}{2d_i} \right)^{-d_i}$$
\end{thm}

\section{Acknowledgements} 
I would like to thank Greg Blekherman for useful discussions over e-mail. Ideas developed in Greg Blekherman's articles had a strong influence on parts of this note. I also would like to thank Petros Valettas and Grigoris Paouris for helpful discussions and splendid Greek hospitality at Athens, College Station and wherever else we were able to meet. While I was writing this note, I was enjoying hospitality of \"Ozgur Ki\c{s}isel  at METU, many thanks go to him. Last but not the least, I would like to thank J. Maurice Rojas for introducing me to quantitative aspects of Hilbert's \nth{17} problem, and for many useful discussions.

\end{document}